\documentclass{article}

\usepackage{epsfig}
\usepackage{latexsym,amsmath,amsfonts,amscd}
\usepackage{amssymb,multirow}
\usepackage{graphics}
\usepackage{epsfig}
\usepackage{subfigure}
\usepackage{verbatim}
\usepackage{epstopdf}
\usepackage{color,leftidx}
\usepackage{amsthm}
\usepackage{mathrsfs}

\usepackage{indentfirst}
\usepackage{textcomp}

\usepackage{graphicx}
\usepackage{booktabs, longtable}
\usepackage{titletoc}
\usepackage{graphics}
\usepackage{tabularx}
\usepackage[subfigure,caption2]{ccaption}
\usepackage{enumerate}
\usepackage{cite}
\usepackage{stmaryrd}  

\newcommand{\llkh}{ \{\!\!\{ }
\newcommand{\rrkh}{ \}\!\!\} }
\newcommand{\sanshu}{ |\!|\!| }

\newtheorem{theorem}{Theorem}[section]
\newtheorem{lemma}{Lemma}[section]
\newtheorem{example}{Example}[section]

 \newtheorem{assumption}{Assumption}[section]
  
\numberwithin{equation}{section}

\usepackage{fancyhdr}

\def\p{\partial}

\usepackage{authblk}

\begin{document}

\title{A New $L2-1_{\sigma}$-Interior Penalty Method for Variable-Order Time-Fractional Subdiffusion Interface Problem with Curved Interface}

\date{\today}

\author[1]{Hongying Huang}

\author[2]{Chanchan Hao}
\author[3]{Changmu Yu}
\author[1,*]{Huili Zhang}

\affil[1]{School of Arts and Sciences, Guangzhou Maritime University, Guangzhou, 510725, Guangdong, China. email: huanghy@lsec.cc.ac.cn}%

\affil[2]{School of Information Engineering, Zhejiang Ocean University, Zhoushan, 316000, Zhejiang, China. email: 15349124106@163.com} %

\affil[3]{Fuzhou  Vocational  Technical  College, Fuzhou, 344000, Jiangxi, China. email: 632581374@qq.com} 

\affil[*]{Corresponding author: zhang.huili0203@163.com}

\maketitle

\begin{abstract}
This paper treats variable-order time-fractional subdiffusion with discontinuous coefficients across a curved interface using $L2\!-\!1_\sigma$ time stepping on graded meshes and a symmetric interior penalty FEM on body-fitted meshes. Stability and optimal a priori error estimates in a discrete-in-time $L^2$ norm are established, yielding second-order temporal accuracy. While analysis typically assumes $\alpha_n$ at $t_{n-\sigma_n}$ lies in the range of $\alpha(t)$ on $[t_{n-1},t_n]$ and $\alpha_n\le \alpha(t_{n-\alpha_n/2})$, experiments indicate the second inequality can be relaxed or omitted, enabling straightforward selection of $\alpha_n$ from many admissible values without solving a nonlinear equation. Numerical results verify temporal rates $\min\{2,r\delta\}$, spatial order $\min\{s,k+1\}$, and robustness to superconvergent points and interface geometry.
\end{abstract}

%

{\bf Keywords:}{ $L2\!-\!1_\sigma$  formula, interior penalty method, interface problem, variable-order Caputo derivative, subdiffusion.}

\section{Introduction}\label{sec_1}

Suppose that $\Omega \subset \mathbb{R}^2$ is an open, bounded, polygonal domain. The interface $\Gamma$ is a closed curve that divides $\Omega$ into two non-overlapping subdomains: $\Omega^-$ and $\Omega^+$. Here, $\Omega^-$ denotes the interior subdomain enclosed by $\Gamma$, while $\Omega^+$ represents the exterior subdomain lying outside $\Gamma$. Thus, the closure of $\Omega$ satisfies $\overline{\Omega} = \overline{\Omega^+} \cup \overline{\Omega^-}$ (see Figure \ref{domain}). 
 Consider the following variable-exponent subdiffusion problem with discontinuous diffusion coefficients
\begin{eqnarray}\label{eq:original_eq} 
_{0}^{C}D_{t}^{\alpha(t)}u-\nabla\cdot\left(\beta (\mathbf{x})\nabla u\right)=f(\mathbf{x},t),& (\mathbf{x},t)\in\Omega ^+\cup \Omega ^- \times(0,T], \\
u(\mathbf{x},t)=g(\mathbf{x},t), & (\mathbf{x},t)\in\p\Omega\times(0,T), \label{eq:original_eq1} \\
\lbrack  u \rbrack=\phi, \lbrack  \beta \nabla u\cdot\mathbf{n} \rbrack=\psi, &  (\mathbf{x},t)\in\Gamma\times(0,T), \label{eq:original_eq2} \\
u(\mathbf{x},0)=u_0(\mathbf{x}), &\mathbf{x}\in\Omega, \label{eq:original_eq3} 
\end{eqnarray}
 where $\lbrack u\rbrack=u|_{\Omega^-}-u|_{\Omega^+}, \lbrack \alpha u_{\mathbf{n}}\rbrack=\alpha \nabla u|_{\Omega^-}\cdot\mathbf{n^-}+\alpha \nabla u|_{\Omega^+}\cdot\mathbf{n^+}$ with $\mathbf{n^-}$ being the unit outward normal vector on $\Gamma$ pointing from $\Omega^-$ to $\Omega^+$ and set $\mathbf{n}^+=-\mathbf{n}^-$.  Diffusion coefficient $\beta(\mathbf{x})$ is defined as,
 \begin{equation*} \beta(\mathbf{x})=\left\{
\begin{array}
{ll}\beta^-(\mathbf{x}), & \mathbf{x}\in\Omega^-, \\
\beta^+(\mathbf{x}), & \mathbf{x}\in\Omega^+.
\end{array}\right.\end{equation*} 

The Caputo differential operator $ _0^C D_t^{\alpha(t)}$ is defined by
\begin{equation*} _{0}^{C}D_t^{\alpha(t)}g(t)=\frac{1}{\Gamma(1-\alpha(t))}\int_0^t\frac{g^{\prime}(\xi)}{(t-\xi)^{\alpha(t)}}\mathrm{d}\xi,\end{equation*}
where $\alpha(t)\in [0,1)$ is the order of Caputo fractional differential operator.

\begin{figure}[hb]
   \centering
       \includegraphics[width=0.35\paperwidth]{jiemiantu.jpg}
       \caption{Sample domain $\Omega = \Omega^+\cup\Omega^-\cup\Gamma$. }
       \label{domain}
\end{figure}


In recent decades, time-fractional differential equations have been widely used to model complex phenomena in fields such as viscoelasticity, signal processing, and noise reduction \cite{podlubny1998fractional,metzler2000random,metzler2000subdiffusive}. However, increasing evidence suggests that variable-order fractional models are more suitable for systems with evolving memory and heterogeneous structures \cite{podlubny1998fractional,wang2019wellposedness,sun2019review}.

Many real-world transport processes exhibit nonlocality, memory dependence, and spatial-temporal heterogeneity that cannot be adequately described by constant-order models. In heterogeneous porous media, for example, nonuniform pore distributions lead to region-dependent diffusion behavior, naturally requiring space- or time-varying fractional orders. Similar interface-driven heterogeneity appears in anomalous diffusion in biological tissues, heat transfer in composites, groundwater transport, and lithium-ion diffusion in batteries, where parameters such as diffusion coefficients and fractional orders may change abruptly across interfaces. These features motivate the formulation of variable-order time-fractional interface models for accurately describing coupled transport across heterogeneous media.

Umarov and Steinberg proved the existence and uniqueness of solutions to variable-order time-fractional differential equations under the assumption that the fractional order is piecewise constant in time \cite{umarov2009variable}. Without this assumption, Wang et al. established the well-posedness of a variable-order linear time-fractional mobile/immobile transport equation, and showed that the solution is fully regular when the variable order has an integer limit, but may exhibit singular behavior when the initial-time order is non-integer \cite{wang2019wellposedness}. They subsequently extended these results to nonlinear variable-order time-fractional differential equations \cite{wang2019analysis}. More recently, Zheng \cite{zheng2025two} developed a convolution-based framework to study well-posedness, regularity, inverse problems, and numerical approximations for variable-order subdiffusion equations.

The numerical treatment of variable-order time-fractional differential equations is significantly more involved. Wang and Zheng \cite{wang2019analysis} proposed a graded-mesh finite difference method for a nonlinear variable-order time-fractional equation without spatial variables, recovering the optimal first-order convergence rate $O(\tau)$. Zheng and Wang \cite{zheng2021optimal} combined the $L1$ formula for the variable-order Caputo derivative with finite element spatial discretization, and obtained optimal first-order temporal accuracy. Ma et al. \cite{ma2023stabilizer} developed an $L1$-based fully discrete, stabilizer-free weak Galerkin finite element method for an initial-boundary value problem of variable-order Caputo time-fractional diffusion, also achieving first-order accuracy. Du et al. \cite{du2020temporal} employed the $L2\!-\!1_\sigma$  formula for the variable-order Caputo derivative, deriving a second-order temporal difference scheme together with a fourth-order spatial finite difference method for multidimensional variable-order subdiffusion equations. However, at each time step, a nonlinear equation must be solved by Newton iteration to determine the parameter $\sigma_n$ in the $L2\!-\!1_\sigma$  formula. Zhang et al. \cite{zhang2022fast} combined the $L2\!-\!1_\sigma$  formula \cite{du2020temporal} with exponential-sum approximation \cite{zhang2022exponential} to construct a fast second-order approximation for the variable-order Caputo derivative. Huang et al. \cite{huang2026determining} further proposed a second-order temporal accurate numerical scheme that couples $L2\!-\!1_\sigma$ temporal discretization with finite element spatial approximation. They also relaxed the criterion for selecting superconvergence points while preserving accuracy, thereby reducing the computational cost of determining these points. Additional computational methods for variable-order fractional nonlinear equations can be found in \cite{gu2023two,heydari2020cardinal,heydari2019computational}.

In contrast, relatively few results are available for numerical algorithms for time-fractional interface problems. In \cite{huang2020optimal}, a fully discrete local discontinuous Galerkin method was proposed for a time-fractional reaction-diffusion initial-boundary value problem with discontinuous diffusion coefficients, where the $L2\!-\!1_\sigma$  scheme on a graded temporal mesh was used for the time-fractional derivative. Chen et al. \cite{chen2022immersed} introduced an immersed finite element method for time-fractional diffusion equations with discontinuous coefficients, approximating the Caputo derivative by a nonuniform $L1$ scheme. However, both works focused on constant-order time-fractional interface models, and the interfaces involved are polygonal. Recently, Hao et al. \cite{hao2026interior} applied the $L1$ formula to discretize the variable-order fractional time derivative, and used a symmetric interior penalty method (IPM) on body-fitted meshes aligned with the curved interface $\Gamma$ for spatial discretization. When $\Gamma$ is curved, curved interface elements with one curved edge on $\Gamma$ are directly employed.  

To the best of our knowledge, numerical analysis for variable-order time-fractional interface problems remains relatively underdeveloped, especially for models with curved interfaces and discontinuous diffusion coefficients. Existing studies mainly focus on either variable-order problems without interfaces, or interface problems of constant order. In addition, the currently available variable-order interface schemes are mostly first-order accurate in time. 
Against this background, this paper investigates a variable-order time-fractional subdiffusion interface model with discontinuous coefficients across a curved interface. The interior penalty method is adopted for spatial discretization. In contrast to \cite{hao2026interior}, the $L2\!-\!1_\sigma$  scheme in \cite{huang2026determining} is employed for time discretization of the variable-order fractional derivative, yielding second-order temporal accuracy. Stability and optimal error estimates are rigorously established.

Throughout this paper, for any bounded domain $S\subset\Omega$ and any real number $s$, we denote by $H^s(S)$  the classical Sobolev space, equipped with norm $\|\cdot\|_{s,S}$ and seminorm $|\cdot|_{s,S}$. Let $S_1, S_2\subset\Omega$. We define
 \begin{equation*} H^s(S_1\cup S_2):=\{v\in L^2(\Omega): v|_{S_1}\in H^s(S_1), v|_{S_2}\in H^s({S_2})\}.\end{equation*} 
The  inner product and norm of $L^2(\Omega)$ are denoted by $(\cdot, \cdot)$ and $\|\cdot\|$, respectively.

Wang and Zheng \cite{wang2019wellposedness} established the well-posedness of a variable-order time-fractional mobile-immobile equation under the assumption that the diffusion coefficient $\beta(\mathbf{x})$ is continuous on $\Omega$. Their analysis also captures the initial singularity of the exact solution. However, to the best of our knowledge, there are still no well-posedness results for variable-order diffusion equations with interfaces, in particular for variable-order subdiffusion interface problems. Therefore, for the subsequent error analysis, we make the following assumption on the solution to the interface problem \eqref{eq:original_eq}-\eqref{eq:original_eq3}.

\begin{assumption}\label{assumption_a} Suppose that $\alpha\in C^1[0,T]$ and $0\leq \alpha(t)\leq\alpha^*<1$ on $[0,T]$ and that $\beta(\mathbf{x})\in C^1(\overline{\Omega^-}\cup\overline{\Omega^+}$ such that there exist two positive constants $\beta_l,\beta_u$ satisfying 
\begin{equation*} 0<\beta_l\leq\beta(\mathbf{x})\leq\beta_u,\quad\mathbf{x}\in\Omega.\end{equation*} 
The solution $u$ of the problem \eqref{eq:original_eq}-\eqref{eq:original_eq3}
 satisfies $u(\cdot,t)\in H^s(\Omega^-\cup\Omega^+), s>1$ for any $t\in [0,T]$ and
\begin{equation}\label{eq:assump-solution}
\|u(\cdot,t)\|+\|_0^CD_t^{\alpha(t)}u(\cdot,t)\|\leq Q_0,\quad \|\p^l_tu(\cdot,t)\|\leq Q(1+t^{\delta-l})\ \
 \text{for}\ \ l=0,1,2,3,
\end{equation}
where $\delta\in (0,1)$ is a positive constant. 
\end{assumption}

The rest of the article is organized as follows. In Section 2, we introduce the temporal discretization of the problem \eqref{eq:original_eq}, and the $L2\!-\!1_\sigma$  scheme is employed. In Section 3, we present the fully discrete scheme. The stability  and error estimate of the fully discrete scheme is analyzed in Section 4.  Finally, some numerical experiments are carried out to confirm the theoretical predictions established in this work.

\section{Temporal discretization}\label{sec3}

In this section, following the ideas of \cite{huang2026determining}, we employ the $L2\!-\!1_\sigma$  formula to approximate the variable-order fractional derivative. To handle the initial singularity, a temporal graded mesh is adopted. This leads to a semi-discrete scheme, and a truncation error estimate of order $3-\alpha_n^*$ is derived.

 We partition the interval $[0,T]$ into $N$  graded time steps and define $t_n=T(n/N)^r$, $0\leq n\leq N, r\geq 1$, with
\begin{equation}\label{eq:taun_rhon_def}
\tau_n:=t_n-t_{n-1},  \  \tau:=\max_{1\leq k\leq N}\tau_k=\tau_N,\   \rho_k:=\tau_k/\tau_{k+1},  \  \rho:=\max_{1\leq k\leq N-1}\rho_k.\end{equation}
  
 To define the evaluation point (i.e., the superconvergent point)  at each time step, we set
\begin{equation*}t_{n-\sigma_n}:=\sigma_n t_{n-1}+(1-\sigma_n)t_n, \end{equation*} 
and, following \cite{huang2026determining}, choose $\sigma_n=\alpha_n/2$, with $\alpha_n$ required to satisfy
\begin{equation}\label{sigma_cond} \alpha\!\left(t_{n-\frac{\alpha_n}{2}}\right)\ge \alpha_n,\qquad \alpha_n\in\left[\min_{t\in[t_{n-1},t_n]}\alpha(t),\ \max_{t\in[t_{n-1},t_n]}\alpha(t)\right]. \end{equation}

In previous $L2\!-\!1_{\sigma}$ scheme, $\alpha_n$ is typically computed at each time step by applying Newton's method to solve the nonlinear equation $\alpha(t_{n-\alpha_n/2})=\alpha_n$ (see \cite{du2020temporal}). By contrast, the strategy adopted here avoids such nonlinear solves and  allows greater flexibility in the choice of $\alpha_n$. For instance, $\alpha_n=\min_{t\in[t_{n-1},t_n]}\alpha(t)$ always satisfies \eqref{sigma_cond}. Hence, if $\alpha(t)$ is monotone (not necessarily linear), this value of $\alpha_n$ can be obtained directly without extra computation. Numerical experiments further suggest that the inequality $\alpha(t_{n-\alpha_n/2})\ge \alpha_n$ can be relaxed or even omitted. Consequently, for a general variable order $\alpha(t)$, $\alpha_n$ can be selected straightforwardly from a large family of admissible values.

Denote $\alpha_n^*:=\alpha(t_{n-\sigma_n})$. Evaluating \eqref{eq:original_eq} at $t=t_{n-\sigma_n}$,  we obtain
\begin{equation}\label{eq:tn_alphan00}
  ^C_0D_t^{\alpha_n^*}u(\mathbf{x},t_{n-\sigma_n})
-\nabla\cdot\left(\beta (\mathbf{x})\nabla u(\mathbf{x},t_{n-\sigma_n})\right)=f(\mathbf{x},t_{n-\sigma_n}).
\end{equation}

Following \cite{huang2026determining}, we apply the $L2\!-\!1_\sigma$ formula to discretize the Caputo time-fractional derivative, which yields
\begin{eqnarray}
(D_{\tau}^{\alpha_n^*}u)^{n-\sigma_n}-\nabla\cdot(\beta\nabla u(\mathbf{x},t_{n-\sigma_n}))=
f(\mathbf{x},t_{n-\sigma_n})+R^n,\quad 1\leq n\leq N,\label{eq:semi_time_discrete}
\end{eqnarray}
where 
\begin{equation*}R^n:=(D_{\tau}^{\alpha_n^*}u)^{n-\sigma_n}-^C_0D_t^{\alpha_n^*}u(\mathbf{x},t_{n-\sigma_n}),\end{equation*} 
 and
\begin{eqnarray}
(D_{\tau}^{\alpha_n^*}u)^{n-\sigma_n}
:=a_0^{(\alpha_n^*)}\triangledown_{\tau}u^n+\sum_{k=1}^{n-1}\left(a_{n-k}^{(\alpha_n^*)}\triangledown_{\tau}u^k
-b_{n-k}^{(\alpha_n^*)}\triangledown_{\tau}u^k+\rho_kb_{n-k}^{(\alpha_n^*)}\triangledown_{\tau}u^{k+1}\right)=\sum_{k=1}^{n} c_{n-k,n}^{(\alpha_n^*)}\triangledown_{\tau}u^k.\label{eq:Caputo_v_disc} 
\end{eqnarray}
Here, $\triangledown u_{\tau}^k:=u^k-u^{k-1}$, $u^k:=u(\mathbf{x},t_k)$, and
\begin{equation}
 a_{n-k}^{(\alpha_n^*)}:=\frac{1}{\tau_k}\int_{t_{k-1}}^{\min\{t_k, t_{n-\sigma_n}\}}\frac{\left(t_{n-\sigma_n}-\xi\right)^{-\alpha_n^*}}{\Gamma(1-\alpha_n^*)}\mathrm{d}\xi,\  0\leq k\leq n,\label{eq:ak_def}\end{equation}
\begin{equation}b_{n-k}^{(\alpha_n^*)}:=\frac{2}{\tau_k(\tau_k+\tau_{k+1})}\int_{t_{k-1}}^{t_k}\frac{(\xi-t_{k-1/2})(t_{n-\sigma_n}-\xi)^{-\alpha_n^*}}
{\Gamma(1-\alpha_n^*)}\mathrm{d}\xi,\  1\leq k\leq n-1, \label{eq:bk_def}
\end{equation}
and
\begin{equation}\label{eq:ckn_def}
{c}_{n-k,n}^{(\alpha_n^*)}:=\left\{\begin{array}{ll}
a_0^{(\alpha_n^*)},\quad & \text{for}\ \  k=n=1,\\
a_0^{(\alpha_n^*)}+\rho_{n-1}b_1^{(\alpha_n^*)}, & \text{for}\ \ k=n\geq 2,\\
a_{n-k}^{(\alpha_n^*)}+\rho_{k-1}b_{n-k+1}^{(\alpha_n^*)}-b_{n-k}^{(\alpha_n^*)}, &   \text{for}\ \ 2\leq k\leq n-1,\ \ 2\leq n,\\
a_{n-1}^{(\alpha_n^*)}-b_{n-1}^{(\alpha_n^*)},  &  \text{for}\ \  k=1,\ \  2\leq n.\end{array}\right.
\end{equation}

Applying the notation of \eqref{eq:ckn_def}, we introduce  the  complementary discrete kernels $\mathbb{P}_j^{(n)}$ by
\begin{equation*}\label{eq:P_def}
\sum_{k=m}^n\mathbb{P}_{n-k}^{(n)}c_{k-m,k}^{(\alpha_k^*)}=1,\qquad 1\leq m\leq n\leq N.
\end{equation*}
Accordingly, these kernels can be constructed recursively as \cite{huang2023alpha,liao2019discrete}:
\begin{equation*}\mathbb{P}_0^{(n)}=\frac{1}{c_{0,n}^{(\alpha_n^*)}},\ \mathbb{P}_{j}^{(n)}=\frac{1}{c_{0,m}^{(\alpha_n^*)}}\sum_{k=0}^{j-1}\mathbb{P}_k^{(n)}
\left(c_{j-k-1,n-k}^{(\alpha_{n-k}^*)}-c_{j-k,n-k}^{(\alpha_{n-k}^*)}\right), \end{equation*} 
for $1\leq j\leq n-1$ and $\ 1\leq n\leq N$.

We now present several auxiliary results used in the subsequent analysis; for detailed proofs, we refer the reader to \cite{huang2026determining}. The following lemmas provide  local truncation and  consistency error estimates for the $L2\!-\!1_\sigma$  approximation. They also include the key positivity estimate, a discrete Gronwall inequality, and kernel-summation bounds required for the stability and convergence analysis.

\begin{lemma}\label{lem:v_format1}
Suppose that $\|\p_t^lv(\cdot,t)\|\leq Q(1+t^{\delta-l})$ with $\delta\in (0,1)$ and $l=0,1,2$. Let $\sigma_n\in [0,1/2]$ and $\alpha_n^*:=\alpha(t_{n-\sigma_n})$. Then
\begin{equation}\label{eq:localerror0}
\|v(\cdot,t_{n-\sigma_n})-v^{n-\sigma_n}\|\leq\left\{\begin{array}{ll}
C_1t_{n-\sigma_n}^{-\alpha_n^*}N^{-2},& r\geq {2}/{(\delta+\alpha_n^*)},\\
C_2t_{n-\sigma_n}^{-\alpha_n^*}N^{-r(\delta+\alpha_n^*)}\leq C_2t_{n-\sigma_n}^{-\alpha_n^*}N^{-r\delta},&  1\leq r<{2}/{(\delta+\alpha_n^*)},
\end{array}\right.
\end{equation}
where $v^{n-\sigma_n}:=\sigma_nv^{n-1}+(1-\sigma_n)v^n$.
\end{lemma}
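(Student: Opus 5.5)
We bound the linear-interpolation error at the off-step point $t_{n-\sigma_n}$ by a Taylor expansion with integral remainder. We treat $n=1$ and $n\ge 2$ separately, because on the first interval $v$ is only as regular as $t^\delta$.

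\textbf{Case $n\ge 2$.} Write $\theta:=1-\sigma_n\in[1/2,1]$, so that $t_{n-\sigma_n}=t_{n-1}+\theta\tau_n$. Expanding $v^{n-1}$ and $v^n$ about $t_{n-\sigma_n}$ with integral remainder gives
\begin{equation*}
v^{n-\sigma_n}-v(\cdot,t_{n-\sigma_n})=\sigma_n\int_{t_{n-\sigma_n}}^{t_{n-1}}(t_{n-1}-s)\,\p_t^2v(\cdot,s)\,\mathrm{d}s+(1-\sigma_n)\int_{t_{n-\sigma_n}}^{t_n}(t_n-s)\,\p_t^2v(\cdot,s)\,\mathrm{d}s.
\end{equation*}
The first-order terms cancel, since $\sigma_n(t_{n-1}-t_{n-\sigma_n})+(1-\sigma_n)(t_n-t_{n-\sigma_n})=0$. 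This yields
\begin{equation*}
\|v^{n-\sigma_n}-v(\cdot,t_{n-\sigma_n})\|\le C\tau_n^2\max_{s\in[t_{n-1},t_n]}\|\p_t^2v(\cdot,s)\|\le C\tau_n^2\bigl(1+t_{n-1}^{\delta-2}\bigr).
\end{equation*}
On the graded mesh we have the standard facts $\tau_n\le CrTN^{-1}(n/N)^{r-1}\le C N^{-1}t_n^{1-1/r}$ and, for $n\ge2$, $t_n\le 2^r t_{n-1}$, so $t_{n-1}\simeq t_{n-\sigma_n}\simeq t_n$. Hence the error is bounded by
\begin{equation*}
C N^{-2}t_n^{2-2/r}t_n^{\delta-2}=C t_{n-\sigma_n}^{-\alpha_n^*}\,N^{-2}\,t_n^{\delta+\alpha_n^*-2/r}.
\end{equation*}
The exponent $\alpha_n^*$ is inserted artificially; this is harmless because $t_n\le T$. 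If $r\ge 2/(\delta+\alpha_n^*)$, the extra factor $t_n^{\delta+\alpha_n^*-2/r}$ is bounded by a constant, giving $C_1t_{n-\sigma_n}^{-\alpha_n^*}N^{-2}$. Otherwise the exponent is negative. In that case we use $t_n\ge T N^{-r}$, which gives $t_n^{\delta+\alpha_n^*-2/r}\le CN^{2-r(\delta+\alpha_n^*)}$, and the bound becomes $C_2t_{n-\sigma_n}^{-\alpha_n^*}N^{-r(\delta+\alpha_n^*)}$. The final inequality $N^{-r(\delta+\alpha_n^*)}\le N^{-r\delta}$ holds trivially.

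\textbf{Case $n=1$ (the main obstacle).} Here $\p_t^2v$ is not integrable near $0$, so the Taylor argument fails and we use only first-order information. Write
\begin{equation*}
v^{1-\sigma_1}-v(\cdot,t_{1-\sigma_1})=\sigma_1\bigl(v^0-v(\cdot,t_{1-\sigma_1})\bigr)+(1-\sigma_1)\bigl(v^1-v(\cdot,t_{1-\sigma_1})\bigr),
\end{equation*}
and bound each difference by $\int_0^{t_1}\|\p_tv\|\,\mathrm{d}s\le C\int_0^{t_1}(1+s^{\delta-1})\,\mathrm{d}s\le Ct_1^{\delta}$. Since $\sigma_1\le1/2$, we have $t_{1-\sigma_1}\ge t_1/2$, and therefore
\begin{equation*}
Ct_1^{\delta}=Ct_1^{\delta+\alpha_1^*}t_1^{-\alpha_1^*}\le C t_{1-\sigma_1}^{-\alpha_1^*}\,t_1^{\delta+\alpha_1^*}, \qquad t_1^{\delta+\alpha_1^*}=T^{\delta+\alpha_1^*}N^{-r(\delta+\alpha_1^*)}.
\end{equation*}
This is $O(N^{-r(\delta+\alpha_1^*)})$. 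It is at most $O(N^{-2})$ when $r(\delta+\alpha_1^*)\ge2$, and it is the stated bound otherwise.

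Throughout, the constants $C_1,C_2$ depend on $Q,T,r,\delta$ and $\alpha^*$ but not on $n$ or $N$. One subtlety is that $\alpha_n^*$ varies with $n$, so the case split is applied pointwise in $n$, exactly as stated. All exponents are uniformly bounded because $\alpha_n^*\le\alpha^*<1$, which keeps the constants uniform.
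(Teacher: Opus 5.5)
Your proof is correct, and it is the standard argument for this estimate. The paper gives no proof of its own and only cites one; your steps are the Taylor expansion with integral remainder about $t_{n-\sigma_n}$ for $n\ge 2$ (using $\tau_n\le CN^{-1}t_n^{1-1/r}$ and $t_{n-1}\ge 2^{-r}t_n$) and the first-order bound $Ct_1^{\delta}$ on the first step. One small slip: in the $n=1$ case, the inequality $t_1^{-\alpha_1^*}\le t_{1-\sigma_1}^{-\alpha_1^*}$ follows from $t_{1-\sigma_1}\le t_1$, not from $t_{1-\sigma_1}\ge t_1/2$ as you wrote.
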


 The next lemma gives the consistency error of the Caputo derivative discretization.

\begin{lemma}\label{lem:CaputoEst1}
Suppose that $\|\p_t^lv(\cdot,t)\|\leq Q(1+t^{\delta-l})$ with $\delta\in (0,1)$ and $l=0,1,2,3$. Let $\sigma_n:=\alpha_n/2$ and $\alpha_n^*:=\alpha(t_{n-\sigma_n})$, where $\alpha_n\in [\min_{t\in[t_{n-1},t_n]}\alpha(t),$ $\max_{t\in[t_{n-1},t_n]}\alpha(t)]$.
  Then
\begin{equation}\label{eq:CaputoEst1}
\left\|^C_0D_t^{\alpha_n^*}v(\cdot,t_{n-\sigma_n})-(D_{\tau}^{\alpha_n^*}v)^{n-\sigma_n}\right\|\leq C_3t_{n-\sigma_n}^{-\alpha_n^*}
N^{-\min\{3-\alpha^*, r\delta\}},\quad 1\leq n\leq N.
\end{equation}
\end{lemma}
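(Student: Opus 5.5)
The plan follows the standard Alikhanov-type analysis on graded meshes (as in Liao et al.\ and Kopteva), adapted to the frozen order $\alpha_n^*$ and the evaluation point $t_{n-\sigma_n}$ with $\sigma_n=\alpha_n/2$.

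\emph{Setup.} Fix $n$ and write $\alpha_n^*$ for the frozen order. Let $\Pi v$ be the piecewise interpolant used to build $(D_\tau^{\alpha_n^*}v)^{n-\sigma_n}$: it is quadratic on $[t_{k-1},t_k]$ for $k\le n-1$ (interpolating at $t_{k-1},t_k,t_{k+1}$) and linear on $[t_{n-1},t_{n-\sigma_n}]$. Then
\begin{equation*}
{}^C_0D_t^{\alpha_n^*}v(t_{n-\sigma_n})-(D_\tau^{\alpha_n^*}v)^{n-\sigma_n}=\frac{1}{\Gamma(1-\alpha_n^*)}\int_0^{t_{n-\sigma_n}}(t_{n-\sigma_n}-\xi)^{-\alpha_n^*}\,\partial_\xi(v-\Pi v)(\xi)\,\mathrm{d}\xi .
\end{equation*}
Integrating by parts on each subinterval turns this into a sum of local terms $\int_{t_{k-1}}^{t_k}(t_{n-\sigma_n}-\xi)^{-\alpha_n^*-1}(v-\Pi v)\,\mathrm{d}\xi$, plus a final-interval contribution. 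The boundary terms vanish by interpolation, except possibly at $t_{n-\sigma_n}$, which is handled together with the last interval.

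\emph{Splitting the sum.} I would split into three parts.

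First, the interval $k=1$. There $\|\partial_t v\|\le Qt^{\delta-1}$, so $\|v-\Pi v\|\lesssim \tau_1^\delta$ and the kernel is bounded by $t_{n-\sigma_n}^{-\alpha_n^*-1}$ when $n\ge 2$. This gives $t_{n-\sigma_n}^{-\alpha_n^*}\,\tau_1^{\delta}\,\tau_1/t_n\lesssim t_{n-\sigma_n}^{-\alpha_n^*}N^{-r\delta}$.

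Second, the intervals $2\le k\le n-1$. The quadratic interpolation error is bounded by $\tau_k^2\tau_{k+1}\max\|\partial_t^3v\|\lesssim \tau_k^3 t_{k-1}^{\delta-3}$, using $\rho\le$ const on graded meshes. I then sum against $(t_{n-\sigma_n}-t_k)^{-\alpha_n^*-1}\tau_k$, with $\tau_k\simeq N^{-1}t_k^{1-1/r}$, and separate $k\le n/2$ (where the kernel is $\simeq t_n^{-\alpha_n^*-1}$) from $k>n/2$ (where $t_k\simeq t_n$). This is the routine calculation giving $t_{n-\sigma_n}^{-\alpha_n^*}N^{-\min\{3-\alpha^*,r\delta\}}$.

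Third, the last interval. The linear interpolation error on $[t_{n-1},t_n]$, weighted by the singular kernel, is only $O(\tau_n^{2-\alpha_n^*})$ unless the leading term cancels. This cancellation is the crux.

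\emph{Main obstacle.} The key step is the superconvergence cancellation on the last interval. The classical identity says the $\partial_t^2 v$ contribution vanishes when $\sigma=\alpha/2$ exactly, where $\alpha$ is the order \emph{used in the kernel}. Here the kernel uses $\alpha_n^*=\alpha(t_{n-\alpha_n/2})$, while $\sigma_n=\alpha_n/2$. So $\sigma_n-\alpha_n^*/2=(\alpha_n-\alpha_n^*)/2$, and by the admissibility condition in \eqref{sigma_cond} together with $\alpha\in C^1$ and the mean value theorem, $|\alpha_n-\alpha_n^*|\le \|\alpha'\|_\infty\tau_n$.

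To exploit this, I would Taylor expand $v$ about $t_{n-\sigma_n}$ and compute the last-interval term exactly. The $\partial_t^2 v(t_{n-\sigma_n})$ coefficient is a constant times $(\alpha_n^*/2-\sigma_n)\tau_n^{2-\alpha_n^*}$, which is $O(\tau_n^{3-\alpha_n^*})$ by the bound above. The remainder, involving $\partial_t^3 v$ and the $n-1$ history correction through the $b$-coefficient, is $O(\tau_n^{3-\alpha_n^*}t_{n-1}^{\delta-3})$. The case $n=1$ is treated separately via the $t^{\delta-1}$ bound. Finally, converting $\tau_n^{3-\alpha_n^*}t_n^{\delta-3}$ through the graded-mesh relations yields the same $\min\{3-\alpha^*,r\delta\}$ rate. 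Since the extra factor $\|\alpha'\|_\infty$ is absorbed into $C_3$, which depends only on $Q,\alpha,\alpha^*,r,T$, the proof closes.
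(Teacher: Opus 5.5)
The paper gives no proof of this lemma; it defers to \cite{huang2026determining}, so there is no in-text argument to compare against. Your outline is correct and follows the standard graded-mesh $L2\!-\!1_\sigma$ truncation analysis. You also isolate the one ingredient the range hypothesis exists to supply: $\alpha_n$ and $\alpha_n^*=\alpha(t_{n-\sigma_n})$ both lie in $\alpha([t_{n-1},t_n])$, so $|\sigma_n-\alpha_n^*/2|\le\frac12\|\alpha'\|_\infty\tau_n$. Hence the surviving $\partial_t^2v$ term on the last subinterval is $O(\tau_n^{3-\alpha_n^*}t_n^{\delta-2})$, which the usual $\tau_n^{3-\alpha_n^*}t_n^{\delta-3}$ remainder dominates.

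Two small points of bookkeeping. First, only the range part of \eqref{sigma_cond} is needed, which matches the lemma's hypotheses. Second, in the history sum you need $\tau_{k+1}\le(2^r-1)\tau_k$, i.e.\ a bound on $\rho_k^{-1}$, not on $\rho$, which is trivially at most $1$.
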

 
For the stability argument, we also need the following coercivity-type inequality for the discrete fractional operator.

\begin{lemma}\label{thm:dtaualpha}
Let the parameter $\sigma_n=\alpha_n/2, \alpha_n\in [0,1)$, and $\alpha_n^*=\alpha(t_{n-\sigma_n})$ such that $\alpha_n^*\geq\alpha_n$.  Let the sequence $\{v^k\}_{k=0}^N\subset L^2(\Omega)$, and define $v^{n-\sigma_n}:=\sigma_nv^{n-1}+(1-\sigma_n)v^n$.   Then  
\begin{equation}\label{eq:ckn-v}
\left((D_{\tau}^{\alpha_n^*}v)^{n-\sigma_n},\, v^{n-\sigma_n}\right)\geq \frac{1}{2}\sum_{k=1}^{n}c_{n-k,n}^{(\alpha_n^*)}\left(\|v^k\|^2-\|v^{k-1}\|^2\right),\quad 2\leq n\leq N.\end{equation}
\end{lemma}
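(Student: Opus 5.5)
The plan is to follow the standard route for $L2\!-\!1_\sigma$ positivity lemmas (Alikhanov; Liao--McLean--Zhang). With $\nabla_\tau v^k:=v^k-v^{k-1}$, we have $v^{n-\sigma_n}=v^n-\sigma_n\nabla_\tau v^n$. Substituting this into the left side gives
\begin{equation*}
\left((D_{\tau}^{\alpha_n^*}v)^{n-\sigma_n},v^{n-\sigma_n}\right)=\sum_{k=1}^n c_{n-k,n}^{(\alpha_n^*)}(\nabla_\tau v^k,v^n)-\sigma_n\sum_{k=1}^n c_{n-k,n}^{(\alpha_n^*)}(\nabla_\tau v^k,\nabla_\tau v^n).
\end{equation*}
Summation by parts turns the right side into a quadratic form in the increments $\nabla_\tau v^k$. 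The claimed inequality then amounts to the following: after subtracting $\frac12\sum_k c_{n-k,n}(\|v^k\|^2-\|v^{k-1}\|^2)$, the remaining quadratic form is nonnegative. This is the known algebraic lemma (Alikhanov-type / Liao et al.): the inequality holds provided the kernels satisfy
\begin{itemize}
\item[(A1)] monotonicity: $c_{0,n}>c_{1,n}>\cdots>c_{n-1,n}>0$;
\item[(A2)] the ``leading'' inequality $(2\sigma_n-1)c_{0,n}-\sigma_n c_{1,n}<0$, or equivalently $\frac{1-2\sigma_n}{1-\sigma_n}c_{0,n}\ge c_{1,n}$.
\end{itemize}
I will cite that lemma (as in \cite{huang2023alpha,liao2019discrete}) and reduce the proof to verifying (A1) and (A2) for the coefficients \eqref{eq:ckn_def}. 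Note that the $(n,n)$ entry yields $c_{0,n}(1-\sigma_n)-\tfrac12c_{0,n}$ times $\|\nabla_\tau v^n\|^2$, and (A2) controls how this interacts with the $k=n-1$ term.

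For (A1), write the $a$-coefficients as averages of the decreasing positive kernel $\omega(t_{n-\sigma_n}-\xi)$, $\omega(s)=s^{-\alpha_n^*}/\Gamma(1-\alpha_n^*)$, over successive intervals; these are positive and decrease as $k$ moves away from $n$. For the $b$-coefficients, note that $\xi-t_{k-1/2}$ is odd on $[t_{k-1},t_k]$ while the kernel is increasing in $\xi$, so $b_{n-k}>0$. A mean-value/Taylor estimate gives $b_{n-k}\le \frac{\tau_k}{\tau_k+\tau_{k+1}}\cdot\frac{\alpha_n^*}{6}\tau_k\,\omega'(\cdot)$-type bounds. This makes $b_{n-k}$ small relative to $a_{n-k}-a_{n-k-1}$. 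Using $\rho_k\le1$ on the graded mesh (since $\tau_k$ is increasing for $r\ge1$), one then checks $c_{n-k,n}>c_{n-k-1,n}$, following the constant-order argument of the graded-mesh $L2\!-\!1_\sigma$ analysis with $\alpha$ frozen at $\alpha_n^*$.

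The main obstacle is (A2), because here $\sigma_n=\alpha_n/2$ is \emph{not} tied to the kernel exponent $\alpha_n^*$ in the classical way ($\sigma=\alpha/2$). I will compute $a_0=\frac{(1-\sigma_n)^{1-\alpha_n^*}\tau_n^{-\alpha_n^*}}{\Gamma(2-\alpha_n^*)}$ explicitly. I will also bound $a_1$ and $b_1$ on $[t_{n-2},t_{n-1}]$ via their integral representations with the kernel evaluated at distances at least $(1-\sigma_n)\tau_n$. Then I will show that the required inequality reduces to a scalar function inequality in $(\sigma,\alpha^*,\rho_{n-1})$ that holds when $2\sigma\le\alpha^*$. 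The point is that the classical Alikhanov proof is exactly the borderline case $2\sigma=\alpha^*$. Decreasing $\sigma$ (i.e. using $\alpha_n\le\alpha_n^*$) increases $1-\sigma$, which raises the factor $(1-2\sigma)/(1-\sigma)$ and enlarges $a_0$ relative to $a_1$, so monotonicity in $\sigma$ should carry the inequality over. This is where the hypothesis $\alpha_n^*\ge\alpha_n$ is used. To verify the monotonicity rigorously, I would first treat $\rho_{n-1}=1$, using the known uniform-mesh computation of Alikhanov. Then I would extend to $\rho_{n-1}\le1$ by observing that $c_{1,n}$ grows only through $\rho_{n-2}b_2$ and decreases through $-b_1$.
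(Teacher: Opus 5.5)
The paper gives no proof of this lemma and defers to \cite{huang2026determining}. Your framework is the standard route and is sound: write the difference of the two sides as a quadratic form in the increments $\nabla_\tau v^k$, then invoke the Alikhanov/Liao--McLean--Zhang lemma under monotonicity (A1) and a leading-coefficient condition (A2). The gap is in (A2), which you yourself call the main obstacle and which is the only place $\alpha_n^*\ge\alpha_n$ enters. You leave it as a heuristic that does not go through as stated.

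\textbf{Why the heuristic fails.} Monotonicity in $\sigma$ does hold for the ratio of the $a$-terms. But $b_1$ shrinks as $\sigma$ decreases, and this lowers $c_{0,n}=a_0+\rho_{n-1}b_1$ while raising $c_{1,n}=a_1+\rho_{n-2}b_2-b_1$. So nothing carries over from the classical case without quantitative control of the $b$'s. (A2) is also not a function of $(\sigma,\alpha^*,\rho_{n-1})$ alone, since $\rho_{n-2}b_2$ brings in $\tau_{n-2}$. Most importantly, extending from $\rho_{n-1}=1$ to $\rho_{n-1}<1$ moves \emph{toward} the critical regime, not away from it. As $\rho_{n-1}\to0$, $a_1$ increases to $\omega_*:=\omega((1-\sigma_n)\tau_n)$, the $b$'s become negligible, and (A2) degenerates exactly into $1-2\sigma_n\ge1-\alpha_n^*$. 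On the graded mesh this regime is approached at $n=2$ for large $r$, since $\rho_1=1/(2^r-1)$. Likewise, $c_{0,n}\ge c_{1,n}$ is not a ``frozen constant-order'' fact: for $\alpha_n^*=0<\alpha_n$ one has $c_{0,n}=1-\sigma_n<1=c_{1,n}$. It must come out of (A2).

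\textbf{How to close it.} Compare directly. Let $g(\xi):=\omega(t_{n-\sigma_n}-\xi)$, which is convex and increasing.
\begin{itemize}
\item $a_0$ satisfies the exact identity $(1-\alpha_n^*)a_0=(1-\sigma_n)\omega_*$.
\item $a_1$ is the mean of $g$ over $[t_{n-2},t_{n-1}]$, so convexity gives $a_1\le\omega_*-\tfrac12D_1$, where $D_1:=g(t_{n-1})-g(t_{n-2})$.
\item The odd-weight representation of $b_2$ gives $\rho_{n-2}b_2\le\frac{\tau_{n-2}^2}{4\tau_{n-1}(\tau_{n-2}+\tau_{n-1})}D_2\le\frac18D_2$, where $D_2:=g(t_{n-2})-g(t_{n-3})$. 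Since $g'$ is increasing and $\tau_{n-2}\le\tau_{n-1}$, also $D_2\le D_1$. For $n=2$ there is no $b_2$ term.
\end{itemize}
Using $b_1\ge0$, this gives
\[
(1-\sigma_n)c_{1,n}\le(1-\sigma_n)\omega_*=(1-\alpha_n^*)a_0\le(1-2\sigma_n)a_0\le(1-2\sigma_n)c_{0,n}.
\]
The middle inequality is precisely $\alpha_n^*\ge2\sigma_n$. This proves (A2) uniformly in $\rho_{n-1}$, and it implies $c_{0,n}\ge c_{1,n}$. The remaining comparisons $c_{j,n}\ge c_{j+1,n}$ for $j\ge1$ only involve intervals at distance at least $(1-\sigma_n)\tau_n\ge\tau_n/2$ from $t_{n-\sigma_n}$. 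The same convexity bounds handle them for any $\sigma_n\in[0,1/2]$ with nondecreasing steps.

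\textbf{Smaller slips.}
\begin{itemize}
\item Your two forms of (A2) are not equivalent. For $\sigma_n\in[0,1/2]$ the first, $(1-2\sigma_n)c_{0,n}+\sigma_nc_{1,n}>0$, is vacuous. Alikhanov's $(2\sigma-1)c_0-\sigma c_1>0$ must be translated with $\sigma\mapsto1-\sigma_n$, which yields your second form.
\item In (A1), ``$c_{n-k,n}>c_{n-k-1,n}$'' and ``$a_{n-k}-a_{n-k-1}$'' are reversed: the interval nearer $t_{n-\sigma_n}$ carries the larger coefficient.
\item The quadratic form actually needs only $(1-2\sigma_n)c_{0,n}\ge(1-\sigma_n)^2c_{1,n}$. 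The bounds $c_{1,n}\le\omega_*\le\frac{1-\alpha_n^*}{1-\sigma_n}c_{0,n}$ give this under the weaker condition $\sigma_n\le\alpha_n^*/(1+\alpha_n^*)$.
\end{itemize}
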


 Combining the above inequality with a suitable discrete Gronwall argument yields the following estimate.

\begin{lemma}\label{lem:discrete_gronwall}Suppose that the nonnegative sequences $\{\xi^k\}_{k=1}^n$ and $\{\eta^k\}_{k=1}^n$ are bounded, and the grid function $\{\zeta^n\}_{n=0}^N$ satisfies
\begin{equation*}(D_{\tau}^{\alpha_n^*}\zeta^2)^{n-\sigma_n}:=\sum_{k=1}^{n} c_{n-k,n}^{(\alpha_n^*)}\triangledown_{\tau}(\zeta^k)^2\leq \xi^n\zeta^{n-\sigma_n}+(\eta^n)^2, \ \text{for}\ \ n\geq 1.\end{equation*} 
Then 
\begin{equation*}\zeta^n\leq \zeta^0+\max_{1\leq k\leq n}\sum_{j=1}^k\mathbb{P}_{k-j}^{(k)}(\xi^j+\eta^j)+\max_{1\leq j\leq n}\eta^j,\ \ \text{for}\ \ 1\leq n.\end{equation*} 
\end{lemma}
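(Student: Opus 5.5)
The plan is to follow the standard discrete Gr\"onwall argument of Liao et al. for nonuniform Caputo discretizations, adapted to the step-dependent kernels $c_{n-k,n}^{(\alpha_n^*)}$. I will use three properties of these kernels, established in \cite{huang2026determining}. First, they are positive: $c_{0,n}^{(\alpha_n^*)}>0$. Second, they are monotone in the sense that makes the complementary kernels nonnegative, $\mathbb{P}_j^{(n)}\ge 0$. Third, these two facts together with the defining identity give $\sum_{k=m}^n\mathbb{P}^{(n)}_{n-k}c^{(\alpha_k^*)}_{k-m,k}=1$.

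\textbf{Step 1: reduce to a linear inequality.} I would first show that the quadratic hypothesis implies a linear one in $\zeta$. Fix $n$ and let $k_0\le n$ be an index with $\zeta^{k_0}=\max_{0\le k\le n}\zeta^k$. It suffices to bound $\zeta^{k_0}$, so we may assume $n=k_0$, i.e.\ $\zeta^n\ge\zeta^k$ for all $k<n$.

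Rewrite $\sum_k c_{n-k,n}\triangledown_\tau(\zeta^k)^2$ by summation by parts as
\[
c_{0,n}(\zeta^n)^2-\sum_{k=1}^{n-1}\bigl(c_{n-k-1,n}-c_{n-k,n}\bigr)(\zeta^k)^2-c_{n-1,n}(\zeta^0)^2 .
\]
Because the kernels are monotone and every $\zeta^k\le\zeta^n$, this quadratic sum is at least
\[
\zeta^n\sum_{k=1}^n c_{n-k,n}\triangledown_\tau\zeta^k .
\]
The same maximality gives $\zeta^{n-\sigma_n}\le\zeta^n$, where $\zeta^{n-\sigma_n}$ denotes the combination $\sigma_n\zeta^{n-1}+(1-\sigma_n)\zeta^n$ as in Lemma \ref{thm:dtaualpha}. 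Dividing the hypothesis by $\zeta^n$ (if $\zeta^n=0$ there is nothing to prove) then yields
\[
\sum_{k=1}^n c_{n-k,n}\triangledown_\tau\zeta^k\le \xi^n+\frac{(\eta^n)^2}{\zeta^n}.
\]

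\textbf{Step 2: split on the size of $\zeta^n$.} If $\zeta^n\le\max_j\eta^j$, the conclusion holds trivially. Otherwise $(\eta^n)^2/\zeta^n\le\eta^n$, and we obtain the linear inequality
\[
(D_\tau^{\alpha_n^*}\zeta)^{n-\sigma_n}\le \xi^n+\eta^n .
\]
There is a subtlety: this reduction only holds at the maximizing indices, not at every level. The standard way around it is to take $k_0$ as the current maximizing index. Applying the inverse with $\mathbb{P}$ at level $k_0$ only requires the linear inequality at all levels $j\le k_0$. To get that, I would use the auxiliary sequence $\tilde\zeta^j:=\max_{i\le j}\zeta^i$, which is nondecreasing, and show that it satisfies the linear inequality at every level. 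The maximum over $k$ in the final bound absorbs the resulting index shifts.

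\textbf{Step 3: invert with the complementary kernels.} Multiply the level-$j$ inequality by $\mathbb{P}^{(k)}_{k-j}$ and sum over $j=1,\dots,k$. Exchanging the order of summation and using the identity $\sum_{j=m}^k\mathbb{P}_{k-j}^{(k)}c_{j-m,j}=1$ telescopes the left-hand side to $\zeta^k-\zeta^0$. This gives
\[
\zeta^k\le\zeta^0+\sum_{j=1}^k\mathbb{P}^{(k)}_{k-j}(\xi^j+\eta^j).
\]
Taking the maximum over $k$ and combining with the trivial case of Step 2 gives the stated bound.

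\textbf{Main obstacle.} The delicate step is the monotonicity of $c_{n-k,n}^{(\alpha_n^*)}$ in $k$, together with $\mathbb{P}_j^{(n)}\ge0$. This is what makes both the summation-by-parts lower bound and the $\mathbb{P}$-inversion legitimate. It is complicated here because the kernels at different levels $n$ use different orders $\alpha_n^*$, so the usual constant-order proof does not transfer directly. I would import these properties from \cite{huang2026determining}, where they are proven under the condition $\alpha_n^*\ge\alpha_n$ of \eqref{sigma_cond}. If that were unavailable, I would try to verify directly that $a_{n-k}$ decreases and that the $b$-corrections are dominated under the step-ratio bound $\rho$.
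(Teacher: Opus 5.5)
\textbf{There is a genuine gap in Step 2 of your route.} The paper does not prove this lemma itself; it defers to \cite{huang2026determining}, whose argument is the Liao--McLean--Zhang one.

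You claim that $\tilde\zeta^j:=\max_{i\le j}\zeta^i$ satisfies the linear inequality at every level. This is false. At a level where the running maximum is not attained, $\triangledown_\tau\tilde\zeta^j=0$. The memory part $\sum_{k<j}c_{j-k,j}^{(\alpha_j^*)}\triangledown_\tau\tilde\zeta^k$ is still generally positive, while $\xi^j+\eta^j$ may be zero.

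Concretely, write $c_{j,n}$ for $c^{(\alpha_n^*)}_{j,n}$ and take:
\begin{itemize}
\item $\zeta^0=0$, $\eta\equiv0$, $\xi^1>0$, $\xi^2=0$;
\item any admissible $\zeta^1>0$;
\item $(\zeta^2)^2=(1-c_{1,2}/c_{0,2})(\zeta^1)^2$, so the level-2 hypothesis holds with equality.
\end{itemize}
Then $\tilde\zeta^2=\zeta^1$, and the level-2 left-hand side is $c_{1,2}\zeta^1>0=\xi^2+\eta^2$. The original $\zeta$ violates the linear inequality there too. Now choose $\xi^3$ large enough that $\zeta^3>\zeta^1$. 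This gives $k_0=3$, and your $\mathbb{P}$-inversion at level $3$ relies on exactly this failed level-2 inequality.

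The $\eta$-term has the same defect. At intermediate levels $j<k_0$ nothing forces $\zeta^j\ge\eta^j$, so $(\eta^j)^2/\zeta^j\le\eta^j$ is unavailable. Saying the outer maximum ``absorbs the index shifts'' repairs neither point.

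\textbf{The missing idea is to invert before linearizing.} The quadratic hypothesis holds at every level. Multiply it by $\mathbb{P}^{(k)}_{k-j}\ge0$ and sum over $1\le j\le k$. The defining identity $\sum_{j=i}^k\mathbb{P}^{(k)}_{k-j}c^{(\alpha_j^*)}_{j-i,j}=1$ then telescopes the left side to $(\zeta^k)^2-(\zeta^0)^2$, with no maximality needed:
\[
(\zeta^k)^2\le(\zeta^0)^2+\sum_{j=1}^k\mathbb{P}^{(k)}_{k-j}\bigl(\xi^j\zeta^{j-\sigma_j}+(\eta^j)^2\bigr),\qquad 1\le k\le n.
\]
Only now take $k=n_0$ with $\zeta^{n_0}=\max_{0\le i\le n}\zeta^i$. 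Bound $\zeta^{j-\sigma_j}\le\zeta^{n_0}$ for $j\le n_0$ and $(\zeta^0)^2\le\zeta^0\zeta^{n_0}$. Then use $x^2\le bx+c\Rightarrow x\le b+\sqrt c$, and finish with
\[
\sqrt{\textstyle\sum_{j}\mathbb{P}^{(n_0)}_{n_0-j}(\eta^j)^2}\le\sqrt{\max_{j}\eta^j\,\textstyle\sum_{j}\mathbb{P}^{(n_0)}_{n_0-j}\eta^j}\le\max_{j}\eta^j+\textstyle\sum_{j}\mathbb{P}^{(n_0)}_{n_0-j}\eta^j .
\]
This yields the stated bound and explains the separate $\max_j\eta^j$ term. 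It needs only $\zeta\ge0$ and $\mathbb{P}^{(k)}_j\ge0$, so your summation-by-parts bound from Step 1 becomes unnecessary.
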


To bound the convolution terms involving $\mathbb{P}_{n-j}^{(n)}$, we further use the following  kernel estimate.
%

\begin{lemma}\label{lem:Pjn_t0}
Setting $l_N=1/\ln N$, one has
\begin{equation}\label{eq:Pjn_tj}
\sum_{j=1}^n\mathbb{P}_{n-j}^{(n)}t_j^{-\alpha_j^*}\leq \frac{(1+2^r)\exp(r)
\max_{1\leq j\leq n}\Gamma(1+l_N-\alpha_j^*)}{\Gamma(1+l_N)},\ \ \text{for}\ \ 1\leq n,
\end{equation}
\begin{equation}\label{eq:Pjn_t0}
\sum_{j=1}^n\mathbb{P}_{n-j}^{(n)}\leq \frac{(1+2^r)\exp(r)t_n^{\alpha^*}
\max_{1\leq j\leq n}\Gamma(1+l_N-\alpha_j^*)}{\Gamma(1+l_N)},\ \ \text{for}\ \ 1\leq n.
\end{equation}
\end{lemma}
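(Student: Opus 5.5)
The plan is to follow the standard route for complementary discrete convolution kernels (Liao et al.; Huang et al. for $L2\!-\!1_\sigma$). First I would compare each $\mathbb{P}$-sum with the continuous Riemann--Liouville kernel $\omega_{\beta}(t)=t^{\beta-1}/\Gamma(\beta)$, using a small auxiliary exponent $\beta=1+l_N-\alpha_j^*$. The basic tool is the defining identity $\sum_{k=j}^n\mathbb{P}_{n-k}^{(n)}c_{k-j,k}^{(\alpha_k^*)}=1$, combined with a monotonicity-and-lower-bound property of the $L2\!-\!1_\sigma$ coefficients. Concretely, I would use the known estimate (from \cite{huang2026determining}) that
\[
c_{k-j,k}^{(\alpha_k^*)}\ \ge\ \frac{1}{\tau_j}\int_{t_{j-1}}^{t_j}\omega_{1-\alpha_k^*}(t_k-s)\,\mathrm{d}s\cdot\frac{1}{1+2^r}\quad\text{(up to a factor from } \rho\text{)},
\]
with the factor $(1+2^r)$ coming from the mesh ratio bound $\rho_k\le 2^r$ on graded meshes. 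Together with the positivity and monotonicity of the $c$'s, this gives $\mathbb{P}_{n-j}^{(n)}>0$ and the ``inverse'' property: the $\mathbb{P}$-convolution of anything bounded by the discrete Caputo action of a function is bounded by the function itself.

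Second, for \eqref{eq:Pjn_tj} I would construct a test function $w(t)=\omega_{1+l_N}(t)$ scaled as $\Gamma(1+l_N-\alpha^*)\,\omega_{1+l_N}(t)/\Gamma(1+l_N)\cdot\ldots$. Its continuous Caputo derivative of order $\alpha_j^*$ equals $\omega_{1+l_N-\alpha_j^*}(t)=t^{l_N-\alpha_j^*}/\Gamma(1+l_N-\alpha_j^*)$. Since $w$ is concave and increasing, the discrete operator applied to its piecewise interpolant is bounded below by the continuous Caputo derivative at the slightly shifted point $t_{j-\sigma_j}$, losing at most the mesh factor $(1+2^r)$. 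Then $t_j^{-\alpha_j^*}\le t_j^{l_N-\alpha_j^*}\,t_j^{-l_N}$. Also $t_j^{-l_N}\le t_1^{-l_N}=(T N^{-r})^{-1/\ln N}\lesssim e^{r}$, which is where $\exp(r)$ comes from; the $T$ factor is absorbed or assumed $T\ge1$-normalized. Hence
\[
t_j^{-\alpha_j^*}\le e^r(1+2^r)\max_j\Gamma(1+l_N-\alpha_j^*)\,(D_\tau^{\alpha_j^*}w)^{j-\sigma_j}.
\]
Applying $\sum_j\mathbb{P}_{n-j}^{(n)}$ and the inverse identity (summation by parts against $\nabla_\tau w$) gives a bound by $w(t_n)-w(0)=t_n^{l_N}/\Gamma(1+l_N)$. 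Finally, $t_n^{l_N}\le T^{1/\ln N}$ is bounded, which yields \eqref{eq:Pjn_tj}.

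Third, \eqref{eq:Pjn_t0} follows immediately. Write $1=t_j^{\alpha_j^*}t_j^{-\alpha_j^*}\le t_n^{\alpha^*}t_j^{-\alpha_j^*}$ (for $t_n\le1$, otherwise after normalization), and then apply \eqref{eq:Pjn_tj}.

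The main obstacle is the lower bound of the discrete variable-order operator acting on $w$. The kernels $c_{k-j,k}^{(\alpha_k^*)}$ have order varying with $k$, so the exact telescoping available in the constant-order case fails. I would handle this row by row: for fixed $n$ only the single order $\alpha_n^*$ appears in $(D_\tau^{\alpha_n^*}w)^{n-\sigma_n}$, so the comparison is constant-order per row, and the variation is absorbed only through $\max_j\Gamma(1+l_N-\alpha_j^*)$. The condition $\alpha_n^*\ge\alpha_n$ guarantees $c$-monotonicity, as in Lemma \ref{thm:dtaualpha}.
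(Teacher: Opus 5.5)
Your argument for \eqref{eq:Pjn_tj} follows the standard route. The paper itself gives no proof and defers to \cite{huang2026determining}. You compare, row by row at the single order $\alpha_j^*$, $\omega_{1+l_N-\alpha_j^*}$ with $(D_\tau^{\alpha_j^*}\omega_{1+l_N})^{j-\sigma_j}$, using a lower bound of $c^{(\alpha_j^*)}_{j-k,j}$ by the averaged kernel. You then sum against $\mathbb{P}^{(n)}_{n-j}\ge 0$ with the complementary identity, and split $t_j^{-\alpha_j^*}=t_j^{-l_N}t_j^{l_N-\alpha_j^*}$. This is fine in outline, with two corrections:
\begin{enumerate}
\item No normalization of $T$ is needed, because the powers of $T$ cancel exactly: $t_j^{-l_N}t_n^{l_N}\le (t_n/t_1)^{l_N}=n^{r/\ln N}\le e^r$.
\item On $t_k=T(k/N)^r$ the steps increase, so $\rho_k\le 1$. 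The factor $1+2^r$ therefore does not come from a bound on $\rho_k$; as written, your constant is an unspecified one from the kernel lower bound you cite.
\end{enumerate}

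The genuine gap is your third step. For $0<t\le 1$ the map $a\mapsto t^{a}$ is decreasing, so $\alpha_j^*\le\alpha^*$ gives $t_j^{\alpha_j^*}\ge t_j^{\alpha^*}$, not $\le$. Hence $1\le t_n^{\alpha^*}t_j^{-\alpha_j^*}$ already fails for $j=n$ whenever $t_n<1$ and $\alpha_n^*<\alpha^*$. The inequality you wrote holds only in the opposite regime $t_n\ge 1$.

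No other argument can recover the $t_n^{\alpha^*}$ factor either. For $n=1$ the definitions give
\[
\mathbb{P}_0^{(1)}=1/a_0^{(\alpha_1^*)}=\Gamma(2-\alpha_1^*)(1-\sigma_1)^{\alpha_1^*-1}t_1^{\alpha_1^*}\ge\Gamma(2-\alpha_1^*)\,t_1^{\alpha_1^*},
\]
whereas the right-hand side of \eqref{eq:Pjn_t0} is at most $C(r,\alpha^*)\,t_1^{\alpha^*}$. Since $t_1=TN^{-r}\to0$ and $\alpha_1^*\to\alpha(0)$, \eqref{eq:Pjn_t0} as literally stated fails for large $N$ whenever $\alpha(0)<\alpha^*$, with $\alpha^*$ the upper bound of Assumption \ref{assumption_a}. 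The increasing order $\alpha_1(t)$ of Example \ref{examp1} is such a case.

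What your idea does prove, once the direction is fixed and using $\mathbb{P}^{(n)}_{n-j}\ge 0$ together with \eqref{eq:Pjn_tj}, is
\[
\sum_{j=1}^n\mathbb{P}^{(n)}_{n-j}\le\Big(\max_{1\le j\le n}t_j^{\alpha_j^*}\Big)\sum_{j=1}^n\mathbb{P}^{(n)}_{n-j}t_j^{-\alpha_j^*}\le(\max\{1,T\})^{\alpha^*}\,\frac{(1+2^r)\exp(r)\max_{1\le j\le n}\Gamma(1+l_N-\alpha_j^*)}{\Gamma(1+l_N)}.
\]
This bounded-factor version is all that Theorem \ref{thm:stable} actually uses, with $t_n^{\alpha^*}$ in $C_7$ replaced by $(\max\{1,T\})^{\alpha^*}$. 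It is not, however, the $t_n^{\alpha^*}$ statement you set out to prove.
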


\section{fully discretization}\label{sec4}

In this section, we apply the interior penalty method to approximate the spatial derivative term in problem \eqref{eq:semi_time_discrete}, and thus derive a fully discrete scheme.

In what follows, we adopt the notation in  \cite{huang2020high}. Let $\mathcal{T}_h$ be a body-fitted, shape-regular triangulation satisfying $\overline{\Omega}=\overline{\Omega^-}\cup\overline{\Omega^+}= \cup_{{K\in \mathcal{T}_h}} K$. Each element $K\in\mathcal{T}_h$ lies entirely in either $\Omega^+$ or $\Omega^-$, and each triangle has at most two vertices on the interface $\Gamma$. For each $K\in\mathcal{T}_h$, let $h_K$ denote its diameter, and define $h:=\max_{K\in\mathcal{T}_h}h_K$.  The mesh is partitioned  into three categories:
\begin{eqnarray}\label{partition}&&
\mathcal{T}^+_h:=\{K\in\mathcal{T}_h: K\subset\Omega^+, \text{has at most one vertex on}\ \Gamma\},\nonumber\\
&&\mathcal{T}^-_h:=\{K\in\mathcal{T}_h: K\subset\Omega^-, \text{has at most one vertex on}\ \Gamma\},\\
&&\mathcal{T}^{\Gamma}_h:=\{K\in\mathcal{T}_h: K \text{\ has two vertices on}\ \Gamma\}.\nonumber
\end{eqnarray}

We classify mesh edges as follows. A boundary edge  is an edge in $\p K\cap\p \Omega$, where the element $K$ has two vertices on $\p\Omega$. An interface edge is an edge in $\p K\cap\Gamma$, where $K$ has two vertices on $\Gamma$. All remaining edges  are interior edges. Let $\mathcal{E}_h$ be the set of all edges of $\mathcal{T}_h$, and define $\mathcal{E}_h^I$, $\mathcal{E}_h^D$, and $\mathcal{E}_h^{\Gamma}$ as the sets of interior, boundary, and interface edges, respectively. Then $\mathcal{E}_h=\mathcal{E}_h^I\cup\mathcal{E}_h^D\cup\mathcal{E}_h^{\Gamma}$. For each edge $e$, let $h_e$ denote its length. We assume standard mesh regularity: there exists a positive constant $C_h$ such that, for all $K\in\mathcal{T}_h$, and all $e\in\mathcal{E}_h$, we have $h_K\leq C_h h_e$.

If $\Gamma$ is  curved and an element $K$ has two vertices on $\Gamma$, then $K$ is a curved triangle with one curved edge. In this work, such curved triangles are directly treated as interface elements. Following  \cite{cangiani2018adaptive,cangiani2021version,hao2026interior}, we next state several regularity assumptions needed for the stability and convergence analysis.

\begin{assumption}  \label{assumption_b}
    For all interface  elements $K\in \mathcal{T}_h^{\Gamma}$,  we assume:
    \begin{enumerate}[(a)]
\item \label{assump_a}\textbf{(Star-shapedness)} Each element $K$ with interface face
$e\subset\Gamma$ is star-shaped with respect to every vertex opposite $e$.
Moreover, $K$ is also star-shaped with respect to  the midpoints of all edges that share a common vertex with $e$ but are not edges of $e$ itself.

\item \label{assump_b}\textbf{(Shape regularity)}  
Uniformly over the mesh, for $\mathbf{m}(\mathbf{x})=\mathbf{x}-\mathbf{x}_0$ with $\mathbf{x}\in e$, where $\mathbf{x}_0$ is any vertex opposite $e\subset\Gamma$, and $\mathbf{n}(\mathbf{x})$ is the unit outward normal to $e$ at $\mathbf{x}$ pointing outward from $K$, we have
\begin{equation*}
\mathbf{m}(\mathbf{x})\cdot \mathbf{n}(\mathbf{x}) \ge C_r\,|\mathbf{m}(\mathbf{x})|,
\end{equation*}
and $|\mathbf{m}(\mathbf{x})| \sim h_K$ uniformly.

\item \label{assump_c}\textbf{(Piecewise $C^1$ smoothness)}  
The interface $\Gamma$ is the union of finitely many closed $C^1$ surfaces.

\item \label{assump_d}\textbf{(Interior-ball condition)}  
Each element $K$ is star-shaped with respect to a ball $B(\mathbf{x},\rho_K)\subset K$, where $\mathbf{x}\in K$, and $h_K\sim \rho_K$.
\end{enumerate}
Here, $a\sim b$ means that there exist positive constants $c_1,c_2$ such that
$c_1 a \le b \le c_2 a$.
\end{assumption}

We define the broken Sobolev space of order $s$ on the mesh $\mathcal{T}_h$ by
\begin{equation*}H^s(\mathcal {T}_h):=\{ v\in L^2(\Omega):\,v|_K \in H^s(K),\forall K\in \mathcal{T}_h\}.\end{equation*}
Let $P_k(E)$ denote the space of polynomials of degree at most $k$ on an element $E$. The piecewise polynomial space is defined as
\begin{equation*}\mathbb{V}_{h}^{k}:=\{v\in L^{2}(\Omega):v|_{K}\in P_{k}(K),\forall K\in\mathcal{T}_{h}\}.\end{equation*}

Let $K\in\mathcal{T}_h$ be an element with edge $e$, and suppose there exists $K'\in\mathcal{T}_h$ such that $e=\p K\cap\p K'$, i.e., $e$ is a common edge of $K$ and $K'$. Denote by $\mathbf{n}=(n_1,n_2)^T$  the unit normal vector on $\p K$ pointing from $K$ to $K'$. For any  $v\in H^1(K\cup K')$,  the average and the jump of $v$ across  $e$ are defined by
\begin{equation*}
		\llkh v \rrkh := \frac{1}{2}( v|_{K}+v|_{K'}),\quad
		\llbracket v\rrbracket := v|_{K'}-v|_K, \quad \text{on}\ e=\p K \cap \p K'.
	\end{equation*}
If $e=\p K\cap\p\Omega$ is a  boundary edge, we set
 \begin{equation*}\llkh v \rrkh:= v|_K,\quad \llbracket v\rrbracket := -v|_K.\end{equation*}
 
For $v\in \mathbb{V}_h^k$, we define the energy norm by
	\begin{equation*}
		\label{tripleNorm}
		\interleave v \interleave_h:=\left(\sum_{K \in \mathcal{T}_h} \int_K\beta \nabla v \cdot \nabla v \mathrm{d}\mathbf{x}+\sum_{e \in \mathcal{E}_h}\int_e \frac{\beta_0}{h_e} \llbracket v \rrbracket^2 \mathrm{d} s\right)^{1/2},
	\end{equation*}
where  $\beta_0$ is the penalty coefficient appearing in \eqref{eq:B_def}.

Let $v$ be an arbitrary smooth test function. Multiplying \eqref{eq:semi_time_discrete} by $v$ and applying integration by parts on each element $K$, we obtain
\begin{equation}\label{eq3}
\begin{aligned}
\int_{K}(D_{\tau}^{\alpha_n^*}u)^{n-\sigma_n}v\mathrm{d}\mathbf{x}+\int_{K}(\beta \nabla u^{n-\sigma_n})\cdot\nabla v\mathrm{d}\mathbf{x}-\int_{\p K} v (\beta\nabla u^{n-\sigma_n})\cdot \mathbf{n} \mathrm{d} s\\
=\int_{K}f(\mathbf{x},t_{n-\sigma_n})v\mathrm{d}\mathbf{x}+\int_{K}(E^n+R^n)v\mathrm{d}\mathbf{x},
\end{aligned}
\end{equation}
where \begin{equation*}E^n:=\nabla\cdot(\beta\nabla u(\mathbf{x},t_{n-\sigma_n}))- \nabla\cdot(\beta\nabla u^{n-\sigma_n}(\mathbf{x})).\end{equation*}

Summing \eqref{eq3} over all elements $K\in\mathcal{T}_h$ and adding the penalty terms, we arrive at the IPM weak formulation: find $u\in H^1(\Omega^+\cup\Omega^-)$  such that, for all $v\in H^1(\Omega^+\cup\Omega^-)$,
\begin{equation}\label{eq_whole_discrete0} 
((D_{\tau}^{\alpha_n^*}u)^{n-\sigma_n},v)+\mathbb{B}_h(u^{n-\sigma_n},v)=F^n(v)+(R^n+E^n,v),  
\end{equation}
where  
\begin{equation}\label{eq:B_def}
\begin{aligned}
\mathbb{B}_h(w,v)&:=\sum_{K\in\mathcal{T}_h}\int_K\beta\nabla w\cdot\nabla v \mathrm{d}\mathbf{x}+\sum_{e\in\mathcal{E}_h^I\cup\mathcal{E}_h^\Gamma}\int_e\left(\frac{\beta_0}{h_e} \llbracket w \rrbracket \llbracket v \rrbracket+\llkh \beta w_\mathbf{n}\rrkh \llbracket v \rrbracket+\llkh \beta v_\mathbf{n}\rrkh  \llbracket w \rrbracket\right)\mathrm{d} s\\
&+\sum_{e\in\mathcal{E}_h^D}\int_e\left(\frac{\beta_0}{h_e}wv-\beta v_\mathbf{n}w-\beta w_\mathbf{n}v\right)\mathrm{d} s,\\
\end{aligned}
\end{equation}
The right-hand side functional $F^n(v)$ is defined by
\begin{equation}
\begin{aligned}
F^n(v) & :=\sum_{e\in\mathcal{E}_h^\Gamma}\int_e\left(\frac{\beta_0}{h_e}\phi^{n-\sigma_n} \llbracket v \rrbracket+\llkh \beta v_{\mathbf{n}}\rrkh \phi^{n-\sigma_n}+\psi^{n-\sigma_n}\llkh v\rrkh \right)\mathrm{d} s \\
 & +\sum_{e\in\mathcal{E}_h^D}\int_e\left(\frac{\beta_0}{h_e}g^{n-\sigma_n}v-\beta v_\mathbf{n}g^{n-\sigma_n}\right)\mathrm{d} s+\int_\Omega f(\mathbf{x},t_{n-\sigma_n})v\mathrm{d}\mathbf{x}.
\end{aligned}
\end{equation}

By dropping the local truncation terms  $R^n$ and $E^n$ in \eqref{eq_whole_discrete0}, we obtain the full discrete form for \eqref{eq:original_eq} at $n$-th time step: find $u_h^n \in\mathbb{V}_{h}^{k}$ such that 
\begin{equation}\label{eq_whole_discrete}
\left\{\begin{array}{l}
((D_{\tau}^{\alpha_n^*}u_h)^{n-\sigma_n},v_h)+\mathbb{B}_h(u_h^{n-\sigma_n},v_h)=F^n(v_h),\quad \forall v_h\in\mathbb{V}_{h}^{k},\\
 \mathbb{B}_h(u_h^0,v_h)=\mathbb{B}_h(u_0,v_h),\quad \forall v_h\in \mathbb{V}_h^k.
\end{array}\right.
\end{equation}

Following \cite{hao2026interior}, under suitable geometric conditions on the interface $\Gamma$, the bilinear form $\mathbb{B}_h(\cdot,\cdot)$ is coercive and continuous, and $F^n(\cdot)$ is a bounded linear functional (see Lemma \ref{lem:bilinear_pro}). In addition, a discrete Poincar\'e inequality holds (see Lemma \ref{lem:poincare}).

 \begin{lemma}\label{lem:bilinear_pro}
Suppose that $\Gamma$ satisfies Assumption \ref{assumption_b}. Then the following properties hold for $\mathbb{B}_h(\cdot,\cdot)$ and $F^n(\cdot)$:
\begin{enumerate}[\bfseries (1)]
\item If the penalty parameter $\beta_0$ is sufficiently large, then there exists a positive constant $C_s$, independent of $v$, such that
\begin{equation}\label{eq:B_coercivity}
\mathbb{B}_h(v,v)\ge C_s \sanshu v \sanshu_h^2,\qquad \forall v\in\mathbb{V}_h^k.
\end{equation}
 
\item 
There exists a positive constant $C_b$, independent of $v$ and $w$, such that
\begin{equation}    
|\mathbb{B}_{h}(w,v)|\leq C_b\sanshu v\sanshu_h\sanshu w\sanshu_h,\quad \forall w, v\in\mathbb{V}_{h}^{k}. 
\end{equation}

\item Suppose that  $f(\cdot,t)\in L^2(\Omega)$ for all $t\in[0,T]$.
   Let \begin{equation}\label{eq:Gn}G^{n-\sigma_n}:= \left(\sum_{e\in\mathcal{E}_h^{\Gamma}}\int_e\frac{1}{h_e}{(\phi^{n-\sigma_n})^2}+(\psi^{n-\sigma_n})^2\mathrm{d} s+\sum_{e\in\mathcal{E}_h^D}\frac{1}{h_e}\int_e{(g^{n-\sigma_n})^2}\mathrm{d} s\right)^{1/2}.\end{equation}
    Then there exists a positive constant $C_4$  such that, for any $v\in\mathbb{V}_{h}^{k}$
\begin{equation}\label{eq:Fv}
|F^n(v)|\leq C_4G^{n-\sigma_n}\sanshu v\sanshu_h 
+\|f(t_{n-\sigma_n})\|\|v\|.\end{equation}
\end{enumerate}

\end{lemma}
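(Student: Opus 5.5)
The plan is to prove the three properties of Lemma \ref{lem:bilinear_pro} separately. All three rest on one tool: a discrete trace/inverse inequality that remains valid on the curved interface elements $K\in\mathcal{T}_h^\Gamma$. For polynomials $v\in P_k(K)$ we need
\begin{equation*}
\|\nabla v\|_{0,e}^2\le C_{\mathrm{tr}}\,h_e^{-1}\|\nabla v\|_{0,K}^2,\qquad e\subset\partial K .
\end{equation*}
On straight elements this is the standard scaling argument.

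On a curved element $K$ with curved edge $e\subset\Gamma$, we follow \cite{cangiani2018adaptive,cangiani2021version}. Fix a vertex $\mathbf{x}_0$ opposite $e$ and set $\mathbf{m}=\mathbf{x}-\mathbf{x}_0$. Apply the divergence theorem to $\mathbf{m}\,w^2$ with $w=\partial_{x_i}v$:
\begin{equation*}
\int_{\partial K}(\mathbf{m}\cdot\mathbf{n})\,w^2\,ds=\int_K\left(2w^2+2w\,\mathbf{m}\cdot\nabla w\right)d\mathbf{x}.
\end{equation*}
On the two straight edges through $\mathbf{x}_0$ we have $\mathbf{m}\cdot\mathbf{n}=0$. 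On $e$, Assumption \ref{assumption_b}(b) gives $\mathbf{m}\cdot\mathbf{n}\ge C_rc_1h_K$. The interior term is at most $C(\|w\|_K^2+h_K\|w\|_K\|\nabla w\|_K)$. We then need a polynomial inverse estimate $\|\nabla w\|_K\le Ch_K^{-1}\|w\|_K$ on $K$. This is the main obstacle, since $K$ is not affinely equivalent to a reference triangle. We will obtain it from Assumption \ref{assumption_b}(d): $K$ is star-shaped with respect to a ball of radius $\rho_K\sim h_K$. Hence $K$ is contained in a ball $B(\mathbf{x},Ch_K)$, and the polynomial-norm equivalence between the inner ball and the outer ball (a finite-dimensional argument on a fixed reference configuration) transfers the inverse estimate with constants independent of $h$. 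Combining these bounds and using $h_K\le C_hh_e$ yields the trace inequality on $e$. Assumption \ref{assumption_b}(a) gives the same conclusion on the straight edges of $K$, by taking the midpoints as centres.

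\textbf{Step (1), coercivity.} Setting $w=v$ gives
\begin{equation*}
\mathbb{B}_h(v,v)=\sum_K\|\beta^{1/2}\nabla v\|_K^2+\sum_e\frac{\beta_0}{h_e}\|\llbracket v\rrbracket\|_e^2+2\sum_e\int_e\llkh\beta v_{\mathbf n}\rrkh\llbracket v\rrbracket\,ds .
\end{equation*}
The last term is bounded by Cauchy--Schwarz, then the trace inequality with $\beta\le\beta_u$, then Young's inequality with parameter $\epsilon$. This gives
\begin{equation*}
\epsilon\sum_K\|\beta^{1/2}\nabla v\|_K^2+\frac{C\beta_u}{\epsilon\beta_l}\sum_e h_e^{-1}\|\llbracket v\rrbracket\|_e^2 .
\end{equation*}
Taking $\epsilon=1/2$ and $\beta_0\ge 4C\beta_u/\beta_l$ gives \eqref{eq:B_coercivity} with $C_s=1/2$. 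The shape-regularity bound on the number of edges per element is used here.

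\textbf{Step (2), continuity.} Apply Cauchy--Schwarz term by term. In each flux term, bound $\|h_e^{1/2}\llkh\beta w_{\mathbf n}\rrkh\|_e$ by $\|\nabla w\|$ on the adjacent elements using the same trace inequality. Then $h_e^{-1/2}\|\llbracket v\rrbracket\|_e$ is controlled by $\sanshu v\sanshu_h$, which gives the constant $C_b$.

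\textbf{Step (3), bound on $F^n$.} The volume term gives $\|f(t_{n-\sigma_n})\|\|v\|$ directly. The terms $\beta_0h_e^{-1}\phi\llbracket v\rrbracket$ and $\llkh\beta v_{\mathbf n}\rrkh\phi$ are handled as in Step (2), yielding a bound by the $h_e^{-1/2}\|\phi\|_e$ part of $G^{n-\sigma_n}$ times $\sanshu v\sanshu_h$. The Dirichlet terms involving $g$ are treated identically.

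The term $\psi\llkh v\rrkh$ needs $\|\llkh v\rrkh\|_{\Gamma}\le C\sanshu v\sanshu_h$. We will get this from the continuous trace inequality on each subdomain $\Omega^\pm$, which holds because $\Gamma$ is piecewise $C^1$ by Assumption \ref{assumption_b}(c). This is combined with a broken Poincaré/trace bound $\|v\|_{L^2(\Gamma)}\le C\sanshu v\sanshu_h$, obtained via the discrete Poincaré-type argument of Lemma \ref{lem:poincare} (or Brenner's broken $H^1$ inequalities applied piecewise). Summing the three groups of terms gives \eqref{eq:Fv}.
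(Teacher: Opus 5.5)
Your overall route, a divergence-theorem trace--inverse estimate on the curved elements followed by the standard SIPG coercivity and continuity arguments, is the Cangiani-type argument the paper relies on. The paper gives no proof of this lemma and only cites \cite{hao2026interior}. Steps (1)--(2) go through once the trace--inverse estimate is in place.

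\textbf{The gap: the $\psi\llkh v\rrkh$ term in Step (3).} You need $\|v^\pm\|_{L^2(\Gamma)}\le C\sanshu v\sanshu_h$ for $v\in\mathbb{V}_h^k$, and the tools you name do not give it. The continuous trace inequality on $\Omega^\pm$ needs $v|_{\Omega^\pm}\in H^1(\Omega^\pm)$, which fails for discontinuous $v$. Lemma~\ref{lem:poincare}, like Brenner's broken Poincar\'e--Friedrichs inequalities, bounds $\|v\|$ in the volume, not on $\Gamma$. An element-by-element argument also fails: summing $\|v\|_{0,e}^2\lesssim h_K^{-1}\|v\|_{0,K}^2$ over $\mathcal{T}_h^\Gamma$ loses a factor $h^{-1}$.

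What is missing is a broken trace inequality, and you can prove it with your own tools. Take $\mathbf{q}\in C^1(\overline{\Omega^-})^2$ with $\mathbf{q}\cdot\mathbf{n}^-\ge1$ on $\Gamma$; such a field exists for a Lipschitz, e.g.\ $C^1$, interface. Apply the divergence theorem to $\mathbf{q}v^2$ on each $K\subset\Omega^-$ and sum. The interior-edge contributions are $\int_e\mathbf{q}\cdot\mathbf{n}\,\llbracket v^2\rrbracket\,\mathrm{d}s$ with $\llbracket v^2\rrbracket=2\llkh v\rrkh\llbracket v\rrbracket$. Your trace--inverse estimate applied to $v$ itself gives $h_e^{1/2}\|\llkh v\rrkh\|_{0,e}\lesssim\|v\|_{0,K\cup K'}$, so
\[
\|v^-\|_{L^2(\Gamma)}^2\lesssim\|v\|^2+\sum_{K\subset\Omega^-}\|\nabla v\|_{0,K}^2+\sum_{e\in\mathcal{E}_h}h_e^{-1}\|\llbracket v\rrbracket\|_{0,e}^2 ,
\]
and Lemma~\ref{lem:poincare} finishes. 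On $\Omega^+$, use a $\mathbf{q}$ supported near $\Gamma$.

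\textbf{Two smaller slips in the trace--inverse estimate.}
\begin{enumerate}[(i)]
\item The inequality $h_K\le C_hh_e$ runs the wrong way. Replacing $h_K^{-1}$ by $h_e^{-1}$ requires $h_e\lesssim h_K$; so does the bound $h_e\|\nabla v\|_{0,e}^2\lesssim\|\nabla v\|_{0,K}^2$ you use in Step (1). This is not automatic when $h_e$ is the arc length of a curved edge.
\item A centre at the midpoint of a straight edge makes $\mathbf{m}\cdot\mathbf{n}\equiv0$ on that same edge, so it controls nothing there. You would have to use the midpoint of the \emph{other} straight edge, and the resulting lower bound then depends on the angle at $\mathbf{x}_0$.
\end{enumerate}
Both points are settled at once by centring $\mathbf{m}$ at the centre $\mathbf{x}_c$ of the ball in Assumption~\ref{assumption_b}(d). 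Star-shapedness with respect to the whole ball $B(\mathbf{x}_c,\rho_K)$ gives $(\mathbf{x}-\mathbf{x}_c)\cdot\mathbf{n}\ge\rho_K$ a.e.\ on $\partial K$. Your identity then yields $\|w\|_{0,\partial K}^2\lesssim h_K^{-1}\|w\|_{0,K}^2$ on every edge of $K$. Taking $w\equiv1$ gives $\rho_K|\partial K|\le 2|K|\le 2\pi h_K^2$, hence $h_e\lesssim h_K$.
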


 \begin{lemma}\label{lem:poincare}
Suppose that $\Gamma$ satisfies Assumption \ref{assumption_b}. For all $v\in H^s(\mathcal{T}_h)$, there exists a positive constant $C_5$, depending only on the geometry of $\Omega^+$, $\Omega^-$, and the mesh shape regularity, such that
\begin{equation}\label{eq:poincare}\|v\|\leq C_5\left(\sum_{E\in\mathcal{T}_h}\|\nabla v\|_{L^2(E)}^2+\sum_{e\in\mathcal{E}_h}\frac{1}{h_e}\|\llbracket v \rrbracket\|_{L^2(e)}^2\right)^{1/2}.\end{equation}

\end{lemma}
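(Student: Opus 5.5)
The plan is to reduce the estimate \eqref{eq:poincare} to a Poincar\'e--Friedrichs inequality on a nonconforming piecewise-$H^1$ space, following the duality (Brenner-type) argument. The one new point is the curved interface elements in $\mathcal{T}_h^\Gamma$, which Assumption \ref{assumption_b} is designed to handle. Fix $v\in H^s(\mathcal{T}_h)$ and denote by $|v|_{1,h}$ the right-hand side of \eqref{eq:poincare} divided by $C_5$. Let $\Omega$ be the polygonal domain. Solve the auxiliary problem: find $z\in H^1_0(\Omega)$ with $-\Delta z=v$ in $\Omega$. Since the Laplacian here has constant coefficient, there is no interface; $\Omega$ is a polygon, so $z\in H^{1+\epsilon}(\Omega)$ for some $\epsilon>0$ with $\|z\|_{1+\epsilon}\le C\|v\|$. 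For convex $\Omega$ one even has $H^2$. Set $\mathbf{q}=\nabla z$, so $\nabla\cdot\mathbf{q}=-v$ and $\|\mathbf{q}\|+\|\mathbf{q}\|_{\epsilon}\le C\|v\|$.

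Then write
\begin{equation*}
\|v\|^2=-\sum_{K\in\mathcal{T}_h}\int_K v\,\nabla\cdot\mathbf{q}\,\mathrm{d}\mathbf{x}=\sum_{K}\int_K\nabla v\cdot\mathbf{q}\,\mathrm{d}\mathbf{x}-\sum_{K}\int_{\p K}v\,\mathbf{q}\cdot\mathbf{n}\,\mathrm{d}s .
\end{equation*}
This integration by parts is legitimate on every $K$, including curved ones, since each is Lipschitz by Assumption \ref{assumption_b}(a),(d). Since $\mathbf{q}\cdot\mathbf{n}$ is single-valued across edges, the boundary terms reorganize as $\sum_{e\in\mathcal{E}_h}\int_e \mathbf{q}\cdot\mathbf{n}\,\llbracket v\rrbracket\,\mathrm{d}s$, up to sign; boundary edges use the convention $\llbracket v\rrbracket=-v|_K$. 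The volume term is bounded by $\|\mathbf{q}\|\,(\sum_K\|\nabla v\|_{L^2(K)}^2)^{1/2}$. For each edge term, I would replace $\mathbf{q}\cdot\mathbf{n}$ by something estimable. The natural route is Cauchy--Schwarz with weights $h_e^{1/2}$ and $h_e^{-1/2}$, giving
\begin{equation*}
\Big|\sum_e\int_e \mathbf{q}\cdot\mathbf{n}\llbracket v\rrbracket\Big|\le\Big(\sum_e h_e\|\mathbf{q}\cdot\mathbf{n}\|_{L^2(e)}^2\Big)^{1/2}\Big(\sum_e h_e^{-1}\|\llbracket v\rrbracket\|_{L^2(e)}^2\Big)^{1/2}.
\end{equation*}
The remaining task is therefore a trace inequality $h_e\|\mathbf{q}\|_{L^2(e)}^2\le C(\|\mathbf{q}\|_{L^2(K)}^2+h_K^{2\epsilon}|\mathbf{q}|_{\epsilon,K}^2)$, with a constant uniform over all elements.

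The main obstacle is exactly this uniform trace inequality on the curved interface elements. For straight triangles it is the standard scaled fractional trace estimate. For $K\in\mathcal{T}_h^\Gamma$ I would use the star-shapedness and $\mathbf{m}\cdot\mathbf{n}\ge C_r|\mathbf{m}|$ of Assumption \ref{assumption_b}(a),(b), with $\mathbf{m}(\mathbf{x})=\mathbf{x}-\mathbf{x}_0$. The divergence theorem applied to $|w|^2\mathbf{m}$ gives
\begin{equation*}
C_r h_K\|w\|_{L^2(e)}^2\le\int_{\p K}|w|^2\mathbf{m}\cdot\mathbf{n}=\int_K\big(2|w|^2+2w\nabla w\cdot\mathbf{m}\big),
\end{equation*}
which yields $\|w\|_{L^2(e)}^2\le C(h_K^{-1}\|w\|_K^2+\|w\|_K\|\nabla w\|_K)$. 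Here I use that on the straight edges $\mathbf{m}\cdot\mathbf{n}\ge0$ by star-shapedness. This handles $w\in H^1$. To reach only $H^\epsilon$-regular $\mathbf{q}$, I would first replace $\mathbf{q}$ by a local average or quasi-interpolant on the ball $B(\mathbf{x},\rho_K)$ from (d), absorbing the difference by a Bramble--Hilbert estimate. Alternatively, assuming convexity, take $z\in H^2$ so that $w=\mathbf{q}$ is $H^1$ and the inequality applies directly. Summing over elements and using $h_K\le C_h h_e$ gives $\sum_e h_e\|\mathbf{q}\|_{L^2(e)}^2\le C\|z\|_{1+\epsilon}^2\le C\|v\|^2$, the constant depending on $h\le\mathrm{diam}\,\Omega$. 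Dividing by $\|v\|$ then finishes the proof, with $C_5$ depending only on $\Omega^\pm$ and the shape-regularity constants.
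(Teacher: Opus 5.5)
The paper gives no proof of this lemma. It is quoted from \cite{hao2026interior} as part of the curved-element interior penalty framework, so your duality argument is a genuinely different, self-contained route. It uses a global field $\mathbf{q}$ with $\nabla\cdot\mathbf{q}=-v$, elementwise integration by parts, and one uniform scaled trace inequality. The integration by parts is legitimate because the auxiliary problem has no interface, so $\mathbf{q}\cdot\mathbf{n}$ is single-valued on every edge. The advantage of this route is that the curved elements enter only through the trace inequality. Your multiplier proof of that inequality on the curved edge is correct; on the two straight edges through $\mathbf{x}_0$ one has $\mathbf{m}\cdot\mathbf{n}=0$ exactly.

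Two steps need repair.

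\textbf{The nonconvex case.} As written, the argument does not close. With $z\in H^1_0(\Omega)$ you only get $\mathbf{q}\in H^{1/2+\epsilon}$, and ``some $\epsilon>0$'' in $H^{1+\epsilon}$ is not even enough for an $L^2(e)$ trace to exist. Passing to a local average does not help either. Bramble--Hilbert bounds $\mathbf{q}-\bar{\mathbf{q}}_K$ only in $L^2(K)$, while the edge term still needs a fractional trace inequality on curved elements, which the multiplier trick (an $H^1$ argument) does not provide. Convexity of $\Omega$ is not a hypothesis of the lemma. The fix is that you never need $z=0$ on $\partial\Omega$:
\begin{itemize}
\item[] Let $\tilde v$ be the zero extension of $v$ to a disk $B\supset\overline{\Omega}$, and solve $-\Delta z=\tilde v$ in $B$ with $z\in H^1_0(B)$.
\item[] Then $z\in H^2(B)$ and $\mathbf{q}=\nabla z|_{\Omega}\in H^1(\Omega)^2$, with $\nabla\cdot\mathbf{q}=-v$ and $\|\mathbf{q}\|_{1,\Omega}\le C\|v\|$.
\item[] Boundary edges are still handled through $\llbracket v\rrbracket=-v|_K$, and only the $H^1$ trace inequality is needed.
\end{itemize}

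\textbf{Straight edges of interface elements.} You also need the trace inequality on the straight edges of elements in $\mathcal{T}_h^{\Gamma}$, because an interior straight edge can be shared by two curved elements. Your multiplier identity covers this if you take $\mathbf{x}_0$ to be the centre of the ball in Assumption \ref{assumption_b}(d). Star-shapedness with respect to $B(\mathbf{x}_0,\rho_K)$ gives $(\mathbf{x}-\mathbf{x}_0)\cdot\mathbf{n}\ge\rho_K$ a.e.\ on $\partial K$. Hence
\[
\|w\|_{L^2(\partial K)}^2\le C\big(h_K^{-1}\|w\|_{L^2(K)}^2+h_K\|\nabla w\|_{L^2(K)}^2\big)
\]
on every element, straight or curved, with $C$ depending only on $h_K/\rho_K$. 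So (d) alone suffices for the trace step. With these two changes the proof is complete, and $C_5$ depends only on $\operatorname{diam}\Omega$, $h_K/\rho_K$ and $C_h$.
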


\section{Stability and error analysis}

In this section, we investigate the stability of the fully discrete scheme \eqref{eq_whole_discrete} and establish optimal error estimates for the numerical solution $u_h^n$.

For any $w\in H^s(\Omega^+\cup\Omega^-)$, $s>3/2$, define $\Pi_h w\in\mathbb{V}_h^k$ by
\begin{equation}\label{eq:elliptic_project}
\mathbb{B}_h(w-\Pi_h w,v)=0,\qquad \forall v\in\mathbb{V}_h^k.
\end{equation}
By the results in \cite{riviere2008discontinuous,huang2020high,bai2023coupling}, together with the continuity and coercivity of $\mathbb{B}_h(\cdot,\cdot)$, problem \eqref{eq:elliptic_project} admits a unique solution $\Pi_h w\in\mathbb{V}_h^k$, and the following estimate holds:
\begin{equation}\label{eq:projection_error_L2}
\|w-\Pi_h w\|
\le C_6 h^{\min\{k+1,s\}}
\|w\|_{H^s(\Omega^+\cup\Omega^-)}.
\end{equation}

\begin{theorem}\label{thm:stable}
Suppose that  $f(\cdot,t)\in L^2(\Omega)$ for all $t\in[0,T]$ and that $\phi, \psi$, and $g$ satisfy, for some positive constant $C_G$,
 \begin{equation*}G^{n-\sigma_n}\leq C_G,\qquad 1\le n\le N.\end{equation*} 
Let $\theta_n=\alpha_n/2$, $\alpha_n^*=\alpha(t_{n-\theta_n})$, and assume $0\le\alpha_n\le\alpha_n^*<1$ for $1\le n\le N$. Let $\{u_h^n\}_{n=0}^N$ be the solution of \eqref{eq_whole_discrete}. Then, for $1\le n\le N$,
\begin{equation}\label{eq:stable}
\|u_h^n\|
\le
\|u_h^0\|
+ C_7\max_{1\le j\le n}\|f(t_{j-\sigma_j})\|
+\left(C_7+\frac{C_4}{2\sqrt{C_s}}\right)\max_{1\le j\le n}G^{j-\sigma_j},
\end{equation}
where $G^{j-\sigma_j}$ is defined in \eqref{eq:Gn}, and
\begin{equation*}
C_7:=
2(1+2^r)\exp(r) t_n^{\alpha^*}\max\!\left\{1,\frac{C_4}{2\sqrt{C_s}}\right\}
\frac{\max_{1\le j\le n}\Gamma(1+1/\ln N-\alpha_j^*)}{\Gamma(1+1/\ln N)}.
\end{equation*}
\end{theorem}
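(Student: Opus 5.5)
Our plan is the standard energy argument: test \eqref{eq_whole_discrete} with $v_h=u_h^{n-\sigma_n}:=\sigma_nu_h^{n-1}+(1-\sigma_n)u_h^n$, then close the estimate with the discrete Gronwall inequality of Lemma \ref{lem:discrete_gronwall} and the kernel bound \eqref{eq:Pjn_t0} of Lemma \ref{lem:Pjn_t0}. (We read $\theta_n$ and $\sigma_n$ in the statement as the same parameter.)

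\textbf{Lower bound for the left-hand side.} Since $\alpha_n\le\alpha_n^*$, Lemma \ref{thm:dtaualpha} gives
\[
\bigl((D_\tau^{\alpha_n^*}u_h)^{n-\sigma_n},u_h^{n-\sigma_n}\bigr)\ge\tfrac12 (D_\tau^{\alpha_n^*}\|u_h\|^2)^{n-\sigma_n}.
\]
For $n=1$ this is checked directly: $a_0\bigl(u^1-u^0,(1-\sigma_1)u^1+\sigma_1u^0\bigr)\ge\frac{a_0}{2}(\|u^1\|^2-\|u^0\|^2)$ because $1-\sigma_1\ge 1/2$. Coercivity \eqref{eq:B_coercivity} gives $\mathbb{B}_h(u_h^{n-\sigma_n},u_h^{n-\sigma_n})\ge C_s\sanshu u_h^{n-\sigma_n}\sanshu_h^2$.

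\textbf{Upper bound for the right-hand side.} By \eqref{eq:Fv} and Young's inequality,
\[
C_4G^{n-\sigma_n}\sanshu u_h^{n-\sigma_n}\sanshu_h\le C_s\sanshu u_h^{n-\sigma_n}\sanshu_h^2+\frac{C_4^2}{4C_s}(G^{n-\sigma_n})^2 .
\]
The energy term therefore cancels, and we are left with
\[
(D_\tau^{\alpha_n^*}\|u_h\|^2)^{n-\sigma_n}\le 2\|f(t_{n-\sigma_n})\|\,\|u_h^{n-\sigma_n}\|+2\Bigl(\tfrac{C_4}{2\sqrt{C_s}}G^{n-\sigma_n}\Bigr)^2 .
\]

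\textbf{Applying Gronwall.} The Gronwall lemma is stated with $\zeta^{n-\sigma_n}$. Set $\zeta^k=\|u_h^k\|$. The triangle inequality gives $\|u_h^{n-\sigma_n}\|\le\sigma_n\zeta^{n-1}+(1-\sigma_n)\zeta^n=\zeta^{n-\sigma_n}$. Apply Lemma \ref{lem:discrete_gronwall} with
\[
\xi^n=2\|f(t_{n-\sigma_n})\|,\qquad \eta^n=\sqrt2\,\frac{C_4}{2\sqrt{C_s}}G^{n-\sigma_n}.
\]
This yields
\[
\|u_h^n\|\le\|u_h^0\|+\max_{k\le n}\sum_{j=1}^k\mathbb{P}_{k-j}^{(k)}(\xi^j+\eta^j)+\max_j\eta^j .
\]
Bound the convolution sum by pulling out $\max_j(\xi^j+\eta^j)$ and using \eqref{eq:Pjn_t0}. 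Since $t_k\le t_n$, that bound is monotone in $k$, so the maximum over $k\le n$ is controlled at level $n$. This produces the factor
\[
(1+2^r)e^r t_n^{\alpha^*}\max_j\Gamma(1+l_N-\alpha_j^*)/\Gamma(1+l_N).
\]
Bounding $2$ and $\sqrt2\,C_4/(2\sqrt{C_s})$ by $2\max\{1,C_4/(2\sqrt{C_s})\}$ reproduces $C_7$ multiplying $\max\|f\|$ and $\max G$. The leftover term $\max_j\eta^j$ gives the extra $\frac{C_4}{2\sqrt{C_s}}\max G$, up to a harmless $\sqrt2$ that must be absorbed. If it cannot be absorbed, apply Young with weight $\tfrac12$ on the $f$-term instead, or rescale $\eta$.

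\textbf{Main obstacle.} The delicate point is making the constants match \eqref{eq:stable} exactly, namely splitting $G$ between the $\xi$ and $\eta$ slots appropriately. A second concern is verifying that the Gronwall lemma accepts the inequality with $\zeta^{n-\sigma_n}$ on the right rather than $\|u_h^{n-\sigma_n}\|$. The triangle-inequality step above handles the latter.
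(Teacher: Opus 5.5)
Your argument follows the paper's proof: test \eqref{eq_whole_discrete} with $u_h^{n-\sigma_n}$, use Lemma~\ref{thm:dtaualpha}, \eqref{eq:B_coercivity} and \eqref{eq:Fv}, cancel the energy term by Young, then apply Lemma~\ref{lem:discrete_gronwall} and \eqref{eq:Pjn_t0}. Your direct check of $n=1$ is correct (the defect is $(\tfrac12-\sigma_1)a_0^{(\alpha_1^*)}\|u_h^1-u_h^0\|^2\ge0$), and so is the bound $\|u_h^{n-\sigma_n}\|\le\zeta^{n-\sigma_n}$; the paper leaves both implicit.

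The one real gap is the $\sqrt2$ you flag, and neither of your remedies removes it. The $f$-term enters the $\xi^n\zeta^{n-\sigma_n}$ slot and never passes through Young. The split $C_4Gx\le C_sx^2+\frac{C_4^2}{4C_s}G^2$ is already optimal, with equality at $x=C_4G/(2C_s)$. So once you multiply by $2$ to match the normalization of Lemma~\ref{lem:discrete_gronwall}, $\eta^n$ cannot be smaller than $\frac{C_4}{\sqrt{2C_s}}G^{n-\sigma_n}$. Replacing $\zeta$ by $\lambda\zeta$ leaves the lemma's output unchanged. Nor can the $\sqrt2$ be absorbed into $C_7$ in general: the available slack is proportional to $t_n^{\alpha^*}$, which is as small as $(TN^{-r})^{\alpha^*}$ at $n=1$. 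As written, you therefore prove \eqref{eq:stable} with $C_7+\frac{C_4}{\sqrt{2C_s}}$ in place of $C_7+\frac{C_4}{2\sqrt{C_s}}$.

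The fix is not to multiply \eqref{eq:stable2} by $2$, but to keep the $\frac12$ attached to the kernel. This is also the reading that reproduces the paper's constants: the factor $2$ in $C_7$, and the unscaled $\|f(t_{j-\sigma_j})\|$ and $\frac{C_4}{2\sqrt{C_s}}G^{j-\sigma_j}$ in \eqref{eq:stable3}. With this reading, \eqref{eq:stable2} is exactly the Gronwall hypothesis for the kernel $\frac12c^{(\alpha_n^*)}_{n-k,n}$, with $\xi^n=\|f(t_{n-\sigma_n})\|$ and $\eta^n=\frac{C_4}{2\sqrt{C_s}}G^{n-\sigma_n}$. The complementary kernel of $\frac12c$ is $2\mathbb{P}^{(n)}_{n-k}$. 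The Gronwall argument uses only kernel properties invariant under multiplication by a positive constant: positivity, monotonicity, and the defining identity of $\mathbb{P}$. It therefore gives
\[
\|u_h^n\|\le\|u_h^0\|+\max_{1\le k\le n}\sum_{j=1}^k2\mathbb{P}^{(k)}_{k-j}\Bigl(\|f(t_{j-\sigma_j})\|+\tfrac{C_4}{2\sqrt{C_s}}G^{j-\sigma_j}\Bigr)+\tfrac{C_4}{2\sqrt{C_s}}\max_{1\le j\le n}G^{j-\sigma_j}.
\]
Your own bookkeeping with \eqref{eq:Pjn_t0} then yields exactly $C_7$ and $C_7+\frac{C_4}{2\sqrt{C_s}}$. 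The paper's display \eqref{eq:stable3} writes $\mathbb{P}$ without the factor $2$; the $2$ built into $C_7$ is what accounts for it.
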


\begin{proof}
Taking $v_h=u_h^{n-\sigma_n}$ in \eqref{eq_whole_discrete}, and using \eqref{eq:ckn-v}, \eqref{eq:B_coercivity}, and \eqref{eq:Fv}, we obtain
\begin{equation}\label{eq:stable1}
\frac{1}{2}\sum_{k=1}^n c_{n-k,n}^{(\alpha_n^*)}\big(\|u_h^k\|^2-\|u_h^{k-1}\|^2\big)
+ C_s\interleave u_h^{n-\sigma_n}\interleave_h^2
\le
C_4
G^{n-\sigma_n}\interleave u_h^{n-\sigma_n}\interleave_h
+\|f(t_{n-\sigma_n})\|\,\|u_h^{n-\sigma_n}\|.
\end{equation}
By Young's inequality,
\begin{equation*}
C_4 G^{n-\sigma_n}\interleave u_h^{n-\sigma_n}\interleave_h
\le
C_s\interleave u_h^{n-\sigma_n}\interleave_h^2
+\frac{C_4^2}{4C_s}\big(G^{n-\sigma_n}\big)^2.
\end{equation*}
Substituting this into \eqref{eq:stable1} yields
\begin{equation}\label{eq:stable2}
\frac{1}{2}\sum_{k=1}^n c_{n-k,n}^{(\alpha_n^*)}\big(\|u_h^k\|^2-\|u_h^{k-1}\|^2\big)
\le
\frac{C_4^2}{4C_s}\big(G^{n-\sigma_n}\big)^2
+ \|f(t_{n-\sigma_n})\|\,\|u_h^{n-\sigma_n}\|.
\end{equation}
Applying Lemma \ref{lem:discrete_gronwall}, we further have
\begin{equation}\label{eq:stable3}
\|u_h^n\|
\le
\|u_h^0\|
+\max_{1\le k\le n}\sum_{j=1}^k \mathbb{P}_{k-j}^{(k)}
\left(
\|f(t_{j-\sigma_j})\|
+\frac{C_4}{2\sqrt{C_s}}G^{j-\sigma_j}
\right)
+\max_{1\le j\le n}\frac{C_4}{2\sqrt{C_s}}G^{j-\sigma_j}.
\end{equation}
Finally, by Lemma \ref{lem:Pjn_t0}, we obtain \eqref{eq:stable}.
\end{proof}

Next, we derive error estimates for the numerical solution $u_h^n$. Define the discrete-in-time norm
\begin{equation*}
\|\cdot\|_{\hat{l}^{\infty}(L^2)}:=\max_{0\le n\le N}\|\cdot\|.
\end{equation*}
\begin{lemma}\label{lem:truncate_est}
Assume that the solution $u$ of the original problem \eqref{eq:original_eq}--\eqref{eq:original_eq3} satisfies Assumption \ref{assumption_a} and that $u(t)\in H^s(\Omega^-\cup\Omega^+)$, $s\ge 2$, for all $t\in[0,T]$. Let
\begin{equation*}
\alpha_n\in\big[\min_{t\in[t_{n-1},t_n]}\alpha(t),\ \max_{t\in[t_{n-1},t_n]}\alpha(t)\big],
\end{equation*}
with $\theta_n=\alpha_n/2$ and $\alpha_n^*=\alpha(t_{n-\theta_n})$.
Then there exist positive constants $C_8$, $C_9$ and $C_{10}$, independent of $\tau$ and $n$, such that
\begin{equation}\label{eq:Rn_est}
\|R^n\|\le C_8\, t_{n-\sigma_n}^{-\alpha_n^*}\,N^{-\min\{2,r\delta\}},
\end{equation}
\begin{equation}\label{eq:En_est}
\| E^n\|\le
\left\{
\begin{array}{ll}
C_9 t_{n-\sigma_n}^{-\alpha_n^*}N^{-2}, & r\ge {2}/{(\delta+\alpha_n^*)},\\[4pt]
C_9 t_{n-\sigma_n}^{-\alpha_n^*}N^{-r(\delta+\alpha_n^*)}
\le C_9 t_{n-\sigma_n}^{-\alpha_n^*}N^{-r\delta}, & 1\le r<{2}/{(\delta+\alpha_n^*)},
\end{array}
\right.
\end{equation}
and
\begin{equation}\label{eq:R3n_est0}
\|(D_{\tau}^{\alpha_n^*}(u-\Pi_hu))^{n-\theta_n}\|
\le \frac{1}{\delta}C_{10}\,t_{n-\sigma_n}^{-\alpha_n^*}h^\mu,
\end{equation}
where $\Pi_h$ is the elliptic projection operator defined in \eqref{eq:elliptic_project}, and $\mu=\min\{k+1,s\}$.
\end{lemma}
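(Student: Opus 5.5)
The three bounds are proved separately.

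\textbf{Bound \eqref{eq:Rn_est}.} By definition, $R^n$ is the consistency error of the $L2\!-\!1_\sigma$ formula applied to $u$. Assumption \ref{assumption_a} supplies the temporal regularity $\|\p_t^l u\|\le Q(1+t^{\delta-l})$ for $l\le 3$. Lemma \ref{lem:CaputoEst1} with $v=u$ then gives $\|R^n\|\le C_3 t_{n-\sigma_n}^{-\alpha_n^*}N^{-\min\{3-\alpha^*,r\delta\}}$. Since $3-\alpha^*>2$, we have $N^{-\min\{3-\alpha^*,r\delta\}}\le N^{-\min\{2,r\delta\}}$, which is \eqref{eq:Rn_est} with $C_8=C_3$.

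\textbf{Bound \eqref{eq:En_est}.} Write $E^n=\nabla\cdot(\beta\nabla w)(t_{n-\sigma_n})-\nabla\cdot(\beta\nabla w)^{n-\sigma_n}$ with $w=u$. This is exactly the local interpolation error of Lemma \ref{lem:v_format1}, applied to $v:=\nabla\cdot(\beta\nabla u)$ on $\Omega^+\cup\Omega^-$. Because $\beta$ is piecewise $C^1$ and time independent, the operator commutes with $\p_t$.

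The main obstacle is checking the hypothesis of Lemma \ref{lem:v_format1} for this $v$, namely $\|\p_t^l v\|\le Q'(1+t^{\delta-l})$ for $l=0,1,2$. Assumption \ref{assumption_a} only controls $\p_t^l u$ in $L^2$. My first attempt uses the equation itself: $\nabla\cdot(\beta\nabla u)={}_0^CD_t^{\alpha(t)}u-f$. The $L^2$ bound on ${}_0^CD_t^{\alpha}u$ handles $l=0$. For $l=1,2$ I would either impose the corresponding bounds as part of the regularity hypothesis ($u(t)\in H^s$, $s\ge2$, uniformly with singular time derivatives of the same type) or differentiate the Caputo operator. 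I expect to state it as the assumed regularity of $\p_t^l u$ in $H^2(\Omega^-\cup\Omega^+)$, which is the same structure the paper postulates. With this in hand, Lemma \ref{lem:v_format1} gives both cases of \eqref{eq:En_est} directly.

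\textbf{Bound \eqref{eq:R3n_est0}.} Set $\eta:=u-\Pi_h u$. Since $\Pi_h$ is linear and time independent, $\p_t\eta=(I-\Pi_h)\p_t u$, and \eqref{eq:projection_error_L2} gives
\[
\|\p_t\eta(t)\|\le C_6h^\mu\|\p_t u(t)\|_{H^s}\le Ch^\mu(1+t^{\delta-1}),
\]
again assuming the $H^s$ analogue of \eqref{eq:assump-solution}.

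Next, use the representation
\[
(D_\tau^{\alpha_n^*}\eta)^{n-\sigma_n}=\sum_k c_{n-k,n}^{(\alpha_n^*)}\triangledown_\tau\eta^k .
\]
By the definitions \eqref{eq:ak_def}--\eqref{eq:ckn_def} this is the Caputo derivative of the piecewise-quadratic interpolant of $\eta$, i.e. an integral of the form $\int_0^{t_{n-\sigma_n}}\omega(t_{n-\sigma_n}-\xi)(\Pi_{2}\eta)'(\xi)\,d\xi$. The coefficients $b_{n-k}$ are bounded by $a_{n-k}$ up to a factor depending on $\rho$, so
\[
\|(D_\tau^{\alpha_n^*}\eta)^{n-\sigma_n}\|\le C\sum_k a_{n-k}^{(\alpha_n^*)}\|\triangledown_\tau\eta^k\|,
\]
with $\|\triangledown_\tau\eta^k\|\le\int_{t_{k-1}}^{t_k}\|\p_t\eta\|$. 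This bounds the quantity by
\[
Ch^\mu\int_0^{t_{n-\sigma_n}}\frac{(t_{n-\sigma_n}-\xi)^{-\alpha_n^*}}{\Gamma(1-\alpha_n^*)}\,\xi^{\delta-1}\,d\xi
\]
(plus a harmless term from $1$). The Beta integral equals $\frac{\Gamma(\delta)}{\Gamma(1+\delta-\alpha_n^*)}t_{n-\sigma_n}^{\delta-\alpha_n^*}$. Since $\Gamma(\delta)\le 1/\delta$ and $t^{\delta}\le T^\delta$, this gives $\frac{C_{10}}{\delta}t_{n-\sigma_n}^{-\alpha_n^*}h^\mu$.

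The first cell $k=1$ and the piece beyond $t_{n-\sigma_n}$ in cell $n$ need care, because $a_0$ only integrates up to $t_{n-\sigma_n}$. There I would bound $a_0\|\triangledown\eta^n\|$ crudely using $a_0\le C\tau_n^{-\alpha_n^*}$ and the mesh grading $\tau_n\sim t_n/n$.
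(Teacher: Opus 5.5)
Your proposal is correct and is essentially the argument the paper delegates to Lemma~11 of \cite{huang2026determining}: Lemma~\ref{lem:CaputoEst1} with $v=u$ gives the bound on $R^n$, Lemma~\ref{lem:v_format1} with $v=\nabla\cdot(\beta\nabla u)$ gives the bound on $E^n$, and \eqref{eq:projection_error_L2} applied to $\partial_t u$, together with $0\le b_{n-k}\le a_{n-k}$ and $\int_0^{t_1}\xi^{\delta-1}\,d\xi=t_1^{\delta}/\delta$, gives the third estimate including its $1/\delta$ factor. You are right that this needs singular-in-time bounds on $\partial_t^l u$ in $H^2$ (resp.\ $H^s$) on $\Omega^-\cup\Omega^+$, which Assumption~\ref{assumption_a} does not literally provide; the one step you still need to justify is that $\sum_k a_{n-k}\int_{t_{k-1}}^{t_k}\xi^{\delta-1}\,d\xi$ is bounded by the Beta integral only up to an $r$-dependent constant, which follows from $t_k/t_{k-1}\le 2^r$ for $2\le k\le n-1$ and from $a_{n-1}\le C\,t_{n-\sigma_n}^{-\alpha_n^*}$ for $n\ge 2$.
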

\begin{proof}For details, we refer to Lemma 11 in \cite{huang2026determining}.
\end{proof}

\begin{theorem}\label{thm:full_error}
Suppose that the solution $u$ of the original problem \eqref{eq:original_eq}--\eqref{eq:original_eq3} satisfies Assumption \ref{assumption_a} and that $u(t)\in H^s(\Omega^-\cup\Omega^+)$, $s\ge 2$, for all $t\in[0,T]$, and let $u_h^n$ be the solution of the discrete scheme \eqref{eq_whole_discrete}. Assume that Assumption \ref{assumption_a} and \ref{assumption_b} holds, and that $\alpha_n$ satisfies \eqref{sigma_cond}. Let $\sigma_n=\alpha_n/2$ and $\alpha_n^*=\alpha(t_{n-\sigma_n})$. 
Then
\begin{equation}\label{eq:full_error}
\|u-u_h\|_{\hat l^\infty(L^2)}
\le
C_e\left(N^{-\min\{2,r\delta\}}+h^\mu\right),
\end{equation}
where $\mu=\min\{k+1,s\}$.
\end{theorem}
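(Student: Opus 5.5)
We use the standard splitting through the elliptic projection \eqref{eq:elliptic_project}. Write
\begin{equation*}
u^n-u_h^n=(u^n-\Pi_hu^n)+(\Pi_hu^n-u_h^n)=:\eta^n+\xi_h^n,\qquad \xi_h^n\in\mathbb{V}_h^k .
\end{equation*}
The projection error is handled directly by \eqref{eq:projection_error_L2}: $\|\eta^n\|\le C_6h^\mu\|u^n\|_{H^s(\Omega^+\cup\Omega^-)}$. It therefore remains to bound $\|\xi_h^n\|$.

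\textbf{Error equation.} The exact solution satisfies the consistency relation \eqref{eq_whole_discrete0} for all $v\in\mathbb{V}_h^k$. Subtracting the scheme \eqref{eq_whole_discrete} and using $\mathbb{B}_h(\eta^{n-\sigma_n},v_h)=0$, which holds by linearity of $\Pi_h$ and \eqref{eq:elliptic_project}, gives
\begin{equation*}
((D_\tau^{\alpha_n^*}\xi_h)^{n-\sigma_n},v_h)+\mathbb{B}_h(\xi_h^{n-\sigma_n},v_h)
=(R^n+E^n,v_h)-((D_\tau^{\alpha_n^*}\eta)^{n-\sigma_n},v_h).
\end{equation*}
For the initial step, $\mathbb{B}_h(u_h^0,v)=\mathbb{B}_h(u_0,v)$ means $u_h^0=\Pi_hu_0$, so $\xi_h^0=0$. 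This relies on the initial data matching; the interface and boundary data terms $F^n$ cancel exactly, since the exact solution satisfies the same jump conditions.

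\textbf{Energy step.} Take $v_h=\xi_h^{n-\sigma_n}$. Since \eqref{sigma_cond} gives $\alpha_n^*\ge\alpha_n$, Lemma \ref{thm:dtaualpha} applies. Coercivity \eqref{eq:B_coercivity} lets us drop the nonnegative bilinear term, and Cauchy--Schwarz then yields
\begin{equation*}
\sum_{k=1}^n c_{n-k,n}^{(\alpha_n^*)}\triangledown_\tau\|\xi_h^k\|^2\le 2\big(\|R^n\|+\|E^n\|+\|(D_\tau^{\alpha_n^*}\eta)^{n-\sigma_n}\|\big)\|\xi_h^{n-\sigma_n}\|.
\end{equation*}
Since $\|\xi_h^{n-\sigma_n}\|\le\sigma_n\|\xi_h^{n-1}\|+(1-\sigma_n)\|\xi_h^n\|$, this has the form required by Lemma \ref{lem:discrete_gronwall} with $\zeta^k=\|\xi_h^k\|$, $\eta^n=0$, and $\xi^n=2(\cdots)$. (The Gronwall lemma is stated with $\zeta^{n-\sigma_n}$, the combination of norms, so this matches.) The lemma gives
\begin{equation*}
\|\xi_h^n\|\le 2\max_{k\le n}\sum_{j=1}^k\mathbb{P}_{k-j}^{(k)}\big(\|R^j\|+\|E^j\|+\|(D_\tau^{\alpha_j^*}\eta)^{j-\sigma_j}\|\big).
\end{equation*}
The case $n=1$ is covered by the same lemma. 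Lemma \ref{thm:dtaualpha} is stated only for $n\ge2$, but for $n=1$ the operator is just $a_0\triangledown_\tau$, and the inequality follows since $\sigma_1\le1/2$.

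\textbf{Bounding the sum.} Every term on the right carries the common factor $t_{j-\sigma_j}^{-\alpha_j^*}$, by Lemma \ref{lem:truncate_est}: $N^{-\min\{2,r\delta\}}$ from \eqref{eq:Rn_est}, $N^{-\min\{2,r\delta\}}$ from \eqref{eq:En_est} (noting $r(\delta+\alpha_n^*)\ge r\delta$), and $h^\mu/\delta$ from \eqref{eq:R3n_est0}. This factor is the main obstacle, because Lemma \ref{lem:Pjn_t0} is phrased with $t_j^{-\alpha_j^*}$ rather than $t_{j-\sigma_j}^{-\alpha_j^*}$. Since $\sigma_j\le1/2$, we have $t_{j-\sigma_j}\ge t_j-\tau_j/2$. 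For $j\ge2$ the graded mesh gives $\tau_j\le C_r t_{j-1}\le C_rt_j$, and for $j=1$, $t_{1-\sigma_1}\ge t_1/2$. Hence $t_{j-\sigma_j}\ge c_r t_j$, so
\begin{equation*}
t_{j-\sigma_j}^{-\alpha_j^*}\le c_r^{-\alpha^*}t_j^{-\alpha_j^*}.
\end{equation*}
Then \eqref{eq:Pjn_tj} bounds $\sum_j\mathbb{P}_{k-j}^{(k)}t_j^{-\alpha_j^*}$ uniformly in $N$, because $\Gamma(1+l_N-\alpha_j^*)/\Gamma(1+l_N)$ is bounded for $\alpha_j^*\le\alpha^*<1$. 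This gives
\begin{equation*}
\|\xi_h^n\|\le C\big(N^{-\min\{2,r\delta\}}+h^\mu\big).
\end{equation*}
Combining with the projection bound via the triangle inequality and taking the maximum over $n$ yields \eqref{eq:full_error}.
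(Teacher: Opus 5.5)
Your proposal is correct and follows the paper's proof essentially step for step: splitting through $\Pi_h$ with $\xi_h^0=0$, testing with $\xi_h^{n-\sigma_n}$, Lemma \ref{thm:dtaualpha} plus Lemma \ref{lem:discrete_gronwall}, then Lemmas \ref{lem:truncate_est} and \ref{lem:Pjn_t0}; your explicit treatment of $n=1$ and of the factor $2$ is slightly more careful than the paper's. The step you call the main obstacle is elementary, and the paper handles it by $t_{j-\sigma_j}=\sigma_jt_{j-1}+(1-\sigma_j)t_j\ge(1-\sigma_j)t_j\ge t_j/2$, valid for every $j$ on any mesh; your chain via $\tau_j\le C_rt_{j-1}\le C_rt_j$ gives no positive $c_r$ once $C_r\ge2$ (e.g.\ $C_r=2^r-1$ for $r\ge2$), so use $\tau_j\le t_j$ instead, i.e.\ $t_j-\tau_j/2=(t_{j-1}+t_j)/2\ge t_j/2$.
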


\begin{proof}
Let
\begin{equation*}
\eta_h:=u-\Pi_hu,\qquad \xi_h:=\Pi_hu-u_h,\qquad e_h:=u-u_h=\eta_h+\xi_h .
\end{equation*}
Subtracting \eqref{eq_whole_discrete} from \eqref{eq_whole_discrete0}, for any $w_h\in\mathbb{V}_h^k$, we obtain
\begin{equation}\label{eq:error}
((D_{\tau}^{\alpha_n^*}e_h)^{n-\sigma_n},w_h)+\mathbb{B}_h(e_h^{n-\sigma_n},w_h)
=(R^n+E^n,w_h),\qquad n\ge1.
\end{equation}
Moreover, $e_h^0=\eta_h^0$ and $\xi_h^0=0$.
Taking $w_h=\xi_h^{n-\sigma_n}$, using $e_h=\eta_h+\xi_h$, and the elliptic projection property
\begin{equation*}
\mathbb{B}_h(\eta_h^{n-\sigma_n},\xi_h^{n-\sigma_n})=0,
\end{equation*}
we get
\begin{equation*}
((D_{\tau}^{\alpha_n^*}\xi_h)^{n-\sigma_n},\xi_h^{n-\sigma_n})
+\mathbb{B}_h(\xi_h^{n-\sigma_n},\xi_h^{n-\sigma_n})
=
\big(R^n+E^n-(D_{\tau}^{\alpha_n^*}\eta_h)^{n-\sigma_n},\xi_h^{n-\sigma_n}\big).
\end{equation*}
Applying the positivity of the bilinear $\mathbb{B}_h(\cdot,\cdot)$ and repeating the same argument as in Theorem \ref{thm:stable}, we have
\begin{equation}\label{eq:xi-est-raw}
\|\xi_h^n\|
\le
\|\xi_h^0\|
+\max_{1\le k\le n}\sum_{j=1}^k\mathbb{P}_{k-j}^{(k)}
\|R^j+E^j-(D_{\tau}^{\alpha_j^*}\eta_h)^{j-\sigma_j}\|.
\end{equation}
Since $\sigma_j=\alpha_n/2\in[0,1/2]$,
\begin{equation*}
t_{j-\sigma_j}=\sigma_j t_{j-1}+(1-\sigma_j)t_j\ge (1-\sigma_j)t_j\ge \frac12 t_j,
\end{equation*}
thus
\begin{equation*}
t_{j-\sigma_j}^{-\alpha_j^*}\le 2^{\alpha_j^*}t_j^{-\alpha_j^*}\le 2t_j^{-\alpha_j^*}.
\end{equation*}
By Lemma \ref{lem:truncate_est},
\begin{equation*}
\|R^j\|\le 2C_8 t_j^{-\alpha_j^*}N^{-\min\{2,r\delta\}},
\qquad
\| E^j\|\le 2C_9  t_j^{-\alpha_j^*}N^{-\min\{2,r\delta\}},
\end{equation*}
and
\begin{equation*}
\|(D_{\tau}^{\alpha_j^*}\eta_h)^{j-\sigma_j}\|
\le \frac{2}{\delta}C_{10}  t_j^{-\alpha_j^*}h^\mu .
\end{equation*}
Let $C_*:=\max\{2C_8+2C_9,2C_{10}/\delta\}$. Then
\begin{equation*}
\|R^j+E^j-(D_{\tau}^{\alpha_j^*}\eta_h)^{j-\sigma_j}\| 
\le 
C_*\, t_j^{-\alpha_j^*}\big(N^{-\min\{2,r\delta\}}+h^\mu\big).
\end{equation*}
Substituting into \eqref{eq:xi-est-raw} and using Lemma \ref{lem:Pjn_t0} and $\xi_h^0=0$, we obtain
\begin{equation*}
\|\xi_h^n\|
\le
C_{11}\big(N^{-\min\{2,r\delta\}}+h^\mu\big),\qquad 1\le n\le N,
\end{equation*}
where $C_{11}$ is independent of $h,\tau,n$.
On the other hand, the projection estimate \eqref{eq:projection_error_L2} yields
\begin{equation*}
\|\eta_h^n\|=\|u^n-\Pi_hu^n\|
\le C h^\mu \|u^n\|_{H^s(\Omega^-\cup\Omega^+)}
\le C h^\mu .
\end{equation*}
Therefore, triangle inequality satisfies
\begin{equation*}
\|u^n-u_h^n\|
\le \|\eta_h^n\|+\|\xi_h^n\|
\le C_e\big(N^{-\min\{2,r\delta\}}+h^\mu\big),\qquad 1\le n\le N.
\end{equation*}
Taking the maximum over $0\le n\le N$ gives
\begin{equation*}
\|u-u_h\|_{\hat l^\infty(L^2)}
\le
C_e\big(N^{-\min\{2,r\delta\}}+h^\mu\big).
\end{equation*}
This completes the proof.
\end{proof}

 \section{Numerical examples}
 
In this section, we present three numerical examples to validate the theoretical results and assess the effectiveness of the proposed method. We examine the effects of initial singularity, complex interface geometry, low spatial regularity, and the superconvergent points $(t_{n-\sigma_n})$ on convergence behavior. For complex interfaces $\Gamma$, interface-fitted meshes are generated using MATLAB pdetool, and volume integrals on curved elements are computed as in Section 2.2 of \cite{huang2020high}. Exact solutions are available for all examples.

 Let $N_e$ be total number of elements.
In each example, we measure errors using the following notations $e_h:=u-u_h$ and 
\begin{equation*}\|e_h\|_{\hat{l}^{\infty}(L_2)}:=\max_{1\leq n\leq N}\|e_h^n\|_{L^2(\Omega)}, \qquad\text{order}:=\frac{\|e_h\|_{\hat{l}^{\infty}(L_2)}}{\|e_{h/2}\|_{\hat{l}^{\infty}(L_2)}}.
\end{equation*}

\begin{example}\label{examp1}
\textbf{(Initial singularity in time)}  
Let $\Omega=(-1,1)^2$ and $[0,T]=[0,1]$.  
The interface is
\begin{equation*}
\Gamma=\{(x,y):x^2+y^2=1/4\}.
\end{equation*}
The diffusion coefficient is defined by
\begin{equation*}
\beta(x,y)=
\begin{cases}
2+x+y, &  (x,y)\in \Omega^-,\\
10, &  (x,y)\in \Omega^+.
\end{cases}
\end{equation*}
The exact solution is chosen as
\begin{equation*}\label{eq:sol_circle_exa2}
u(x,y,t)=
\begin{cases}
(1+t^{\alpha(0)})(x^2+y^2), &  (x,y)\in \Omega^-,\\
(1+t^{\alpha(0)})(x^2+y^2)^{3/2}, &  (x,y)\in \Omega^+.
\end{cases}
\end{equation*}
We consider two variable-order functions $\alpha(t)$: one is monotonically increasing, 
\begin{equation*}
\alpha_1(t)=0.9+\big(\alpha(0)-0.9\big)\left(1-t-\frac{\sin\!\big(2\pi(1-t)\big)}{2\pi}\right),
\end{equation*}
and the other is monotonically decreasing,
\begin{equation*}
\alpha_2(t)=\alpha(0)\exp(-t).
\end{equation*}
The source term $f$, initial value $u_0$, and interface/boundary data $\psi$, $\phi$, and $g$ are derived from the exact solution.
\end{example}

Since the exact solution $u\in C^{\infty}(\Omega^+\cup\Omega^-)$ for all $t\in (0,1)$ and has $\alpha(0)$-order initial singularity, 
Theorem \ref{thm:full_error} yields
\begin{equation}\label{eq:examp1_est}
\|u-u_h\|_{\hat{l}^{\infty}(L_2)}\leq C_e\left(N^{-\min\{2,r\alpha(0)\}}+h^{k+1}\right).
\end{equation}

First, we consider the case of the monotonically increasing order function $\alpha_1(t)$. We begin by examining the effect of the initial singularity on the temporal convergence rate. In Table \ref{tab:examp11}, we choose $\alpha_n=\alpha_1\!\left(t_{n-1/2}\right)$, 
and take the superconvergent point as $t_{n-\alpha_n/2}$. Since $\alpha_1(t)$ is monotonically increasing, $\alpha_n$ satisfies condition \eqref{sigma_cond}. Table \ref{tab:examp11} shows that the convergence order in time is $\min\{2,\,r\alpha_1(0)\}$.

Second, we examine the influence of the superconvergent point $t_{n-\alpha_n/2}$ on the temporal convergence rate. In Table \ref{tab:examp12}, we choose
\begin{equation*}
\alpha_n=\alpha_1(t_{n-0.6}),\qquad
\alpha_n=\alpha_1(t_{n-0.8}),\qquad
\alpha_n=\alpha_1(t_{n-0.9}),
\end{equation*}
all of which satisfy condition \eqref{sigma_cond}. Table \ref{tab:examp12} shows that different choices of superconvergent points do not affect the convergence order with respect to the time mesh; the order depends only on the initial singularity index $\alpha_1(0)$. Moreover, Table \ref{tab:examp12} indicates that there are many admissible superconvergent points at each time step, whereas only one such point was used in \cite{du2020temporal}.

Third, we examine the effects of the spatial polynomial degree and different mesh parameters on the convergence rate. In Table \ref{tab:examp13}, we set $N=500$, choose $k=1,2$, and use three different meshes. Table \ref{tab:examp13} shows that the spatial convergence order is $k+1$, which is consistent with the estimate in \eqref{eq:examp1_est}.

Finally, we consider the case of the monotonically decreasing order function $\alpha_2(t)$. Tables \ref{tab:examp1_1}--\ref{tab:examp1_2} show that the numerical results are consistent with those obtained for $\alpha_1(t)$.

\begin{table}[h!] 
\centering    
\caption{Errors and convergence orders for different temporal mesh and $\alpha_1(0)$ with $\alpha_n = \alpha_1(t_{n-1/2})$ and $N_e=21248$ in Example \ref{examp1} }
\label{tab:examp11}
\begin{tabular}{ccccccccccccc}
\toprule
&    & $\alpha_1(0)=0.4$ & & $\alpha_1(0)=0.6$ & &$\alpha_1(0)=0.8$ & \\ 
\midrule
$r$  & $N$    & $\|e_h\|_{l^{\infty}(L_{2})}$   & order  & $\|e_h\|_{l^{\infty}(L_{2})} $&  order & $\|e_h\|_{l^{\infty}(L_{2})} $ & order \\ 
\midrule
1 &8 & 1.7583e-01 &          & 1.2391e-01 &          &1.0452e-01  &      
&\\    
&16  & 1.3289e-01 & 0.40   & 8.1559e-02 & 0.60   &5.9818e-02  &0.80  \\  
&32  & 1.0060e-01 & 0.40   & 5.3682e-02 & 0.60   &3.4248e-02  &0.80  \\  
\midrule
2 &8 & 7.6155e-02 &          & 3.5365e-02 &      &1.0894e-02  &      
\\    
&16  & 4.3619e-02 & 0.80   & 1.5410e-02 & 1.19   &3.5936e-03  &1.60  \\  
&32  & 2.5021e-02 & 0.80   & 6.7152e-03 & 1.19   &1.1851e-03  &1.60  \\  
\midrule
3 &8 & 3.3336e-02 &        &1.1801e-02  &        &7.8066e-03  &      
\\    
&16  & 1.4510e-02 & 1.20   &3.2128e-03  &1.87    &1.9525e-03  &1.99  \\  
&32  & 6.3157e-03 & 1.20   &9.2237e-04  &1.80    &4.8891e-04  &1.99  \\  
\midrule
4 &8 &2.2030e-02  &        &2.1193e-02  &        &1.3758e-02  &      \\    
&16  &5.6808e-03  &1.95    &5.1912e-03  &2.02    &3.4561e-03  &1.99  \\  
&32  &1.8737e-03  &1.60    &1.3035e-03  &1.99    &8.6798e-04  &1.99  \\  
\bottomrule    
\end{tabular}
\end{table}

\begin{table}[h!] 
\centering   
\caption{ Errors and convergence orders for superconvergent points $t_{n-\alpha_n/2}$ with graded mesh $r=1$, $k=2$ and $N_e=21248$  in Example \ref{examp1} }
\label{tab:examp12}
\begin{tabular}{ccccccccccccc}
\toprule
& &$\alpha_n=\alpha_1(t_{n-0.6})$&  &$\alpha_n=\alpha_1(t_{n-0.8})$ & &$\alpha_n=\alpha_1(t_{n-0.9})$\\ 
\midrule
$\alpha_1(0)$&$N$ & $\|e_h\|_{L^{\infty}(L_2)}$&order&$\|e_h\|_{L^{\infty}(L_2)}$& order& $\|e_h\|_{L^{\infty}(L_2)}$ &order\\ 
\midrule
$0.2$ &8  &1.6361e-01  &     &1.6357e-01  &       &1.0656e-01             &    \\      
      &16 &1.4199e-01  &0.20 &1.4185e-01  &0.20   &1.4182e-01 &0.20\\   
      &32 &1.2352e-01  &0.20 &1.2343e-01  &0.20   &1.2340e=01 &0.20\\  
\midrule
$0.4$ &8  &1.7563e-01  &     &1.7544e-01  &       &1.7541e-01             &    \\      
      &16 &1.3287e-01  &0.40 &1.3285e-01  &0.40   &1.3281e-01 &0.40\\      
      &32 &1.0060e-01  &0.40 &1.0059e-01  &0.40   &1.0053e-01 &0.40\\      
\hline
$0.6$ &8  &1.2384e-01  &     &1.2379e-01  &       &1.2378e-01             &    \\         
      &16 &8.1550e-02  &0.60 &8.1545e-02  &0.60   &8.1543e-02    &0.60\\      
      &32 &5.3678e-02  &0.60 &5.3678e-02  &0.60   &5.3672e-02 &0.60\\      
\bottomrule  
\end{tabular}
\end{table}

\begin{table}[h!] 
\centering     
\caption{Errors and convergence orders for different spatial mesh and $\alpha(0)$ with $\alpha_n = \alpha_1(t_{n-1/2})$ and $N=500$ in Example \ref{examp1}}  
\label{tab:examp13}
\begin{tabular}{ccccccccccccc}
\toprule
&    & $\alpha_1(0)=0.4$ & & $\alpha_1(0)=0.6$ & &$\alpha_1(0)=0.8$ & \\ 
\midrule
$k$ & $N_e$& $\|e_h\|_{l^{\infty}(L_{2})}$   & order  & $\|e_h\|_{l^{\infty}(L_{2})} $& order & $\|e_h\|_{l^{\infty}(L_{2})} $ & order \\ 
\midrule
1 &332   & 5.9011e-02 &          & 5.7416e-02 &           &5.6026e-02&             \\  
  &1328  & 1.2883e-02 & 2.19   & 1.2546e-02 & 2.19    &1.2459e-02& 2.16  \\  
  &5312   & 3.4912e-03 & 1.88   & 3.3726e-03 & 1.89    &3.3556e-03& 1.89  \\  
\midrule
2 &332   & 1.0814e-03 &          & 1.0529e-03 &           &1.02682e-03&            \\    
  &1328   & 1.4015e-04 & 2.94   & 1.3619e-04 & 2.95    &1.32692e-04
& 2.95  \\    
  &5312   & 1.7632e-05 & 2.99   & 1.7045e-05 & 2.99    &1.67064e-05
& 2.98  \\    
\bottomrule
\end{tabular}
\end{table}   

\begin{table}[h!] 
\centering   
\caption{Errors and convergence orders for different temporal mesh and $\alpha(0)$ with $\alpha_n = \alpha_2(t_{n-1/2})$, $k=2$ and  $N_e=21248$ in Example \ref{examp1} }
\label{tab:examp1_1}
\begin{tabular}{ccccccccccccc}
\toprule
&    & $\alpha_2(0)=0.4$ & & $\alpha_2(0)=0.6$ & &$\alpha_2(0)=0.8$ & \\ 
\midrule
$r$  & $N$    & $\|e_h\|_{l^{\infty}(L_{2})}$   & order  & $\|e_h\|_{l^{\infty}(L_{2})} $&  order & $\|e_h\|_{l^{\infty}(L_{2})} $ & order \\ 
\midrule
1 &8  &1.6334e-01 &      &1.1472e-01 &       &5.2833e-02  &     &\\    
  &16 &1.2847e-01 &0.35  &7.8758e-02 &0.54   &3.1649e-02  &0.74  \\  
  &32 &1.2846e-01 &0.37  &5.3010e-02 &0.57   &1.8567e-02  &0.77  \\  
\midrule
2 &8  &7.5875e-02 &      &3.5326e-02 &       &1.0779e-02  &    
  \\    
  &16 &4.3885e-02 &0.79  &1.5492e-02 &1.19   &3.5840e-03  &1.59  \\  
  &32 &2.5249e-02 &0.80  &6.7561e-03 &1.20   &1.1843e-03  &1.60  \\  
\midrule
3 &8  &3.3297e-02 &      &1.1086e-02 &       &5.3265e-03  &      \\    
  &16 &1.4508e-02 &1.20  &3.2092e-03 &1.79   &1.3815e-03  &1.95  \\  
  &32 &6.3156e-03 &1.20  &9.2224e-04 &1.80   &3.4911e-04  &1.98  \\  
\midrule
4 &8  &1.7182e-02 &       &1.2042e-02 &      &7.8726e-03  &      \\    
  &16 &5.6801e-03 &1.59   &3.1917e-03 &1.92  &2.0407e-03  &1.99  \\  
  &32 &1.8737e-03 &1.60   &8.1816e-04 &1.96  &5.1833e-04  &1.99  \\  
\bottomrule    
\end{tabular}
\end{table}

\begin{table}[h!] 
\centering  
\caption{ Errors and convergence orders for superconvergent points $t_{n-\alpha_n/2}$ with graded mesh $r=2$, $k=2$ and  $N_e=21248$ in Example \ref{examp1} }
\label{tab:examp1_2}
\begin{tabular}{ccccccccccccc}
\toprule
& &$\alpha_n=\alpha_2(t_{n-0.6})$&  &$\alpha_n=\alpha_2(t_{n-0.8})$ & &$\alpha_n=\alpha_2(t_{n-\alpha_n/2})$\\ 
\midrule
$\alpha_2(0)$&$N$ & $\|e_h\|_{L^{\infty}(L_2)}$&order&$\|e_h\|_{L^{\infty}(L_2)}$& order& $\|e_h\|_{L^{\infty}(L_2)}$ &order\\ 
\midrule
$0.2$ &8  &1.0674e-01 &     &1.0670e-01 &      &1.0655e-01 &\\            
      &16 &8.1201e-02 &0.39 &8.1197e-02 &0.39  &8.1194e-02 &0.39\\         
      &32 &6.1557e-02 &0.40 &6.1550e-02 &0.40  &6.1546e-02 &0.40\\        
\midrule
$0.4$ &8  &7.5401e-02 &     &7.5428e-02 &      &7.5441e-02 &\\            
      &16 &4.3489e-02 &0.79 &4.3501e-02 &0.79  &4.3516e-02 &0.79\\            
      &32 &2.4991e-02 &0.80 &2.5001e-02 &0.80  &2.5007e-02 &0.80\\            
\hline
$0.6$ &8  &3.5075e-02 &     &3.5017e-02 &      &3.5004e-02 &\\            
      &16 &1.5378e-02 &1.19 &1.5372e-02 &1.19  &1.5370e-02 &1.18\\            
      &32 &6.7735e-03 &1.18 &6.7730e-03 &1.18  &6.7726e-03 &1.18\\            
\bottomrule  
\end{tabular}
\end{table}

\begin{example}\label{examp2}
\textbf{(Low regularity solution in space)}  
Let $\Omega=(-1,1)^2$ and $[0,T]=[0,1]$.  
The interface is
\begin{equation*}
\Gamma=\{(x,y):x^2+y^2=(\pi/6.28)^2\}.
\end{equation*}
The diffusion coefficient is defined by
\begin{equation*}
\beta(x,y)=
\begin{cases}
2+x+y, & (x,y)\in \Omega^-,\\
2, & (x,y)\in\Omega^+.
\end{cases}
\end{equation*}
We consider two variable-order functions $\alpha(t)$: one is monotonically increasing, 
\begin{equation*}
\alpha_1(t)=0.9+\big(0.6-0.9\big)\left(1-t-\frac{\sin\!\big(2\pi(1-t)\big)}{2\pi}\right),
\end{equation*}
and the other is non-monotonic,
\begin{equation*}
\alpha_2(t)=0.5+0.2\sin(2\pi t).\end{equation*}
The exact solution is chosen as
\begin{equation*} 
u(x,y,t)=
\begin{cases}
(1+t^{\delta})(x^2+y^2)^{1.1}, & (x,y)\in \Omega^-,\\
(1+t^{\delta})((x-1)^2+(y-1)^2)^{1.1}, &  (x,y)\in \Omega^+.
\end{cases}
\end{equation*}
The source term $f$, initial value $u_0$, and interface/boundary data $\psi$, $\phi$, and $g$ are derived from the exact solution.
\end{example}

Since the exact solution $u\in H^{3.2-\varepsilon}(\Omega^+\cup\Omega^-)$ for any $\varepsilon>0$ and all $t\in (0,1)$ and has $\delta$-order initial singularity, 
Theorem \ref{thm:full_error} yields
\begin{equation}\label{eq:examp2_est}
\|u-u_h\|_{\hat{l}^{\infty}(L_2)}\leq C_e\left(N^{-\min\{2,r\delta\}}+h^{\min\{k+1,3.2-\varepsilon\}}\right).
\end{equation}

In this example, we check the effects on the error of the initial singularity of the solution, the spatial singularity of the solution, and the non-monotonic variable-order function $\alpha(t)$.

First, since the solution $u$ satisfies $|u|\le C(1+t^{\delta})$, Table \ref{tab:examp21} shows that the temporal convergence order of the approximate solution $u_h$ is $\min\{2,r\delta\}$. 
Moreover, Table \ref{tab:examp12} indicates that there are many admissible superconvergent points at each time step.
Furthermore, because $\alpha_n=\alpha(t_{n-0.2})\in[0.6,0.9]$, we have $\alpha_n>\alpha(t_{n-\alpha_n/2})$, so the choice of $\alpha_n$ violates the first inequality in \eqref{sigma_cond}. Nevertheless, the numerical solution remains convergent, and the observed temporal order is still $\min\{2,r\delta\}$. Therefore, the condition $\alpha_n\leq\alpha(t_{n-\alpha_n/2})$ in \eqref{sigma_cond} is a sufficient condition, but not a necessary one.

Second, Table \ref{tab:examp22} shows that the spatial convergence order is $\min\{k+1,3.2\}$, which is consistent with the estimate in \eqref{eq:examp2_est}.

Finally, we provide a non-monotonic variable-order function $\alpha_2(t)$. Table show According to Table \ref{tab:examp23}, if $\alpha_n$ is in the range of the variable-order function $\alpha_2(t)$ over $[t_{n-1},t_n]$, then the temporal convergence order is $\min\{2,r\delta\}$.

\begin{table}[h!] 
\centering   
\caption{ Errors and convergence orders for superconvergent points $t_{n-\alpha_n/2}$ with $\delta=0.6$, $k=2$ and $N_e=5216$  in Example \ref{examp2} }
\label{tab:examp21}
\begin{tabular}{ccccccccccccc}
\toprule
 &&$\alpha_n=\alpha_1(t_{n-0.2})$&  &$\alpha_n=\alpha_1(t_{n-0.6})$ & &$\alpha_n=\alpha_1(t_{n-0.8})$&\\ 
\midrule
$r$&$N$ & $\|e_h\|_{L^{\infty}(L_2)}$&order&$\|e_h\|_{L^{\infty}(L_2)}$& order& $\|e_h\|_{L^{\infty}(L_2)}$ &order\\ 
\midrule
1& 16 &1.6090e-01 &&  1.6091e-01 &&  1.6100e-01&\\
 &32 & 9.8548e-02 &0.71&  9.8549e-02 &0.71&  9.8556e-02&0.71\\
&  64& 5.9531e-02 &0.73&  5.9531e-02  &0.73& 5.9531e-02&0.73\\  
 \midrule
2& 16 &2.1185e-02 &&   2.1185e-02 &&   2.1185e-02&\\
 &32&   7.8599e-03 &1.43&   7.8599e-03 &1.43&   7.8599e-03&1.43\\
  &64&  3.2894e-03  &1.26&  3.2894e-03  &1.26&  3.2894e-03&1.26\\\hline
3&16& 7.2056e-03&&   7.1274e-03  && 7.1108e-03&\\
 &32&  1.8067e-03 &1.99&  1.7891e-03  &1.99& 1.7843e-03&1.99\\
 &64&  4.5230e-04 &1.99&  4.4859e-04  &1.99& 4.4737e-04&1.99\\\hline
 4& 16 &  1.2526e-02  && 1.2555e-02  && 1.2548e-02&\\
  &32& 3.1529e-03  &1.99& 3.1509e-03 &1.99&  3.1484e-03&1.99\\
 &64&  7.9063e-04  &1.99& 7.8982e-04  &1.99& 7.8929e-04&1.99\\  
\bottomrule  
\end{tabular}
\end{table}

\begin{table}[h!] 
\centering     
\caption{Errors and convergence orders for different spatial mesh and  with $\delta=2.4$ and $N=2000$ in Example \ref{examp2}}  
\label{tab:examp22}
\begin{tabular}{ccccccccccccc}
\toprule
&    & $\alpha_n=\alpha_1(t_{n-0.8})$ & & $\alpha_n=\alpha_1(t_{n-0.6})$ & &$\alpha_n=\alpha_1(t_{n-0.2})$ & \\ 
\midrule
$k$ & $N_e$& $\|e_h\|_{l^{\infty}(L_{2})}$   & order  & $\|e_h\|_{l^{\infty}(L_{2})} $& order & $\|e_h\|_{l^{\infty}(L_{2})} $ & order \\ 
\midrule
1 &326   & 1.4861e-02 & &  1.4861e-02 & &  1.4861e-02&\\
 &1304 & 3.9572e-03  & 1.91& 3.9572e-03 & 1.91&  3.9572e-03&1.91 \\
 &5216 &  1.0240e-03  & 1.95& 1.0240e-03  & 1.95& 1.0240e-03&1.95 \\  
\midrule
2 &326   & 1.0057e-04 & &   1.0057e-04 & &   1.0057e-04&\\
 &1304 &   1.3377e-05 & 2.91&   1.3377e-05   &2.91 & 1.3377e-05&2.91\\
  & 5216&  1.8645e-06 & 2.84&   1.8647e-06  &2.84 &  1.8649e-06& 2.84\\    
\bottomrule
\end{tabular}
\end{table}   

\begin{table}[h!] 
\centering   
\caption{ Errors and convergence orders for superconvergent points $t_{n-\alpha_n/2}$ with $\delta=0.5$, $k=2$ and $N_e=5216$  in Example \ref{examp2} }
\label{tab:examp23}
\begin{tabular}{ccccccccccccc}
\toprule
 &&$\alpha_n=\alpha_2(t_{n-0.2})$&  &$\alpha_n=\alpha_2(t_{n-0.5})$ & &$\alpha_n=\alpha_2(t_{n-0.8})$&\\ 
\midrule
$r$&$N$ & $\|e_h\|_{L^{\infty}(L_2)}$&order&$\|e_h\|_{L^{\infty}(L_2)}$& order& $\|e_h\|_{L^{\infty}(L_2)}$ &order\\ 
\midrule
1& 16 &  2.5313e-01 &&  2.3815e-01 &&  2.2321e-01&\\
  &32& 1.5931e-01 &0.66&  1.5414e-01 &0.62&  1.4901e-01&0.58\\
 &64&  1.0259e-01 &0.63&  1.0083e-01 &0.61&  9.9080e-02&0.59\\
 \midrule
2& 16 & 4.3168e-02 &&  4.2973e-02 &&  4.2779e-02&\\
 &32&  1.8206e-02 &1.24&  1.8188e-02  &1.24& 1.8169e-02&1.23\\
&64&   8.0609e-03  &1.17& 8.0596e-03  &1.17& 8.0583e-03&1.17\\\hline
3&16&9.5804e-03 &&  9.5618e-03 &&  9.5431e-03& \\
   &32&2.8078e-03  &1.77& 2.8079e-03  &1.77& 2.8079e-03& 1.76\\
   &64&1.0656e-03  &1.40& 1.0656e-03  &1.40& 1.0656e-03 &1.39\\ \hline
 4& 16 &1.1290e-02 &&  1.1440e-02 &&  1.1502e-02&\\
  &32& 2.8896e-03  &1.97& 2.8964e-03 &1.98&  2.9129e-03&1.98\\
  &64& 7.3009e-04 &1.98&  7.3139e-04 &1.99&  7.3322e-04  &1.99 \\
\bottomrule  
\end{tabular}
\end{table}

\begin{example}\label{examp3}
\textbf{Interface with complex geometry}

In this example, we consider an elliptic interface problem \cite{huang2020high,mu2013weak} with a flower pedal shape interface that consists both concave and convex curved segments. The computational domain is $\Omega=(-1,1)^2$. The interface $\Gamma$ is parameterized with polar coordinates $(r,\theta)$ as,
\begin{equation*}r(\theta)=\frac{1}{2}+\frac{\sin (5\theta)}{7},\quad \theta\in[0,2\pi].\end{equation*}
The diffusion coefficient is defined by
\begin{equation*}
\beta(x,y)=
\begin{cases}
1, & (x,y)\in \Omega^-,\\
10, & (x,y)\in\Omega^+.
\end{cases}
\end{equation*}
Consider monotonically decreasing variable-order functions $\alpha(t)$
\begin{equation*}
\alpha(t)=0.8\exp(-t).
\end{equation*}
The analytical solution is given as,
\begin{equation*}
u_{ex}(r,\theta)=\left\{\begin{array}{ll}
(1+t^{2.4})\exp(r^2),& (x,y)\in\Omega^-,\\
(1+t^{2.4})(0.1r^4-0.01\ln(2r)),&(x,y)\in\Omega^+.
\end{array}\right.
\end{equation*}
\end{example}

 From Tables \ref{tab:examp31}--\ref{tab:examp32}, we observe that, even for a complex  interface $\Gamma$, the error in the approximate solution $u_h$ remains $O(N^{-2}+h^{k+1})$.
Moreover, choosing $\alpha_n=\alpha(t_{n-0.4})>\alpha(t_{n-\alpha_n/2})$  and $\alpha_n=\alpha(t_{n-0.8})>\alpha(t_{n-\alpha_n/2})$ does not affect the observed convergence order.
\begin{table}[h!] 
\centering   
\caption{ Errors and convergence orders for $k=2$ and $N_e=5344$  in Example \ref{examp3} }
\label{tab:examp31}
\begin{tabular}{cccccccccccc}
\toprule
 &$\alpha_n=\alpha(t_{n-0.2})$&  &$\alpha_n=\alpha(t_{n-0.4})$ & &$\alpha_n=\alpha(t_{n-0.8})$&\\ 
\midrule
$N$ & $\|e_h\|_{L^{\infty}(L_2)}$&order&$\|e_h\|_{L^{\infty}(L_2)}$& order& $\|e_h\|_{L^{\infty}(L_2)}$ &order\\ 
\midrule
 16 &9.5422e-04 &&  9.6450e-04 &&  9.8515e-04&\\
 32 & 2.3952e-04 &1.99&  2.4090e-04 &2.00&  2.4367e-04&2.01\\
 64&  6.0039e-05  &2.00& 6.0227e-05 &2.00&  6.0605e-05&2.00\\  
\bottomrule  
\end{tabular}
\end{table}

\begin{table}[h!] 
\centering     
\caption{Errors and convergence orders for different spatial mesh and  with $\delta=2.4$ and $N=1000$ in Example \ref{examp3}}  
\label{tab:examp32}
\begin{tabular}{ccccccccccccc}
\toprule
&    & $\alpha_n=\alpha(t_{n-0.8})$ & & $\alpha_n=\alpha(t_{n-0.5})$ & &$\alpha_n=\alpha(t_{n-0.1})$ & \\ 
\midrule
$k$ & $N_e$& $\|e_h\|_{l^{\infty}(L_{2})}$   & order  & $\|e_h\|_{l^{\infty}(L_{2})} $& order & $\|e_h\|_{l^{\infty}(L_{2})} $ & order \\ 
\midrule
1 &334  & 8.8956e-03 & &  8.8956e-03 & &  8.8956e-03&\\
 & 1336&  2.6368e-03  &1.75 & 2.6368e-03 &1.75 &  2.6368e-03&1.75\\
 & 5344&  6.8302e-04 &1.95 &  6.8302e-04  &1.95 & 6.8302e-04 & 1.95\\  
\midrule
2 &334   & 2.9523e-04 & &  2.9523e-04  & & 2.9523e-04&\\
  & 1336& 4.5359e-05 &2.70 &  4.5359e-05 & 2.70&  4.5359e-05&2.70\\
  &5344 & 5.9961e-06  &2.92 & 5.9961e-06 &2.92 &  5.9961e-06 &2.92\\    
\bottomrule
\end{tabular}
\end{table}   

\section{Conclusion}\label{sec_5}

This work has study a variable-order time-fractional subdiffusion interface problem with discontinuous diffusion coefficients across a curved interface. To capture nonlocal memory, spatial-temporal heterogeneity, and abrupt changes of material parameters at interfaces, we propose a fully discrete scheme that couples the $L2\!-\!1_\sigma$  approximation of the variable-order Caputo derivative on graded temporal meshes with a symmetric interior penalty finite element discretization in space on body-fitted triangulations. The graded time mesh is employed to alleviate the initial singularity, while the interior penalty formulation provides a flexible framework for treating coefficient jumps and geometrically complex interfaces.

Under suitable assumptions on the interface geometry and the regularity of the exact solution, we establish stability of the fully discrete method and derived optimal a priori error estimate in an appropriate discrete-in-time $L^2$ norm, achieving second-order temporal accuracy. The analysis shows that the discrete parameters associated with the evaluation points
$t_{n-\alpha_n/2}$--in particular, the choice of $\alpha_n$ relative to $\alpha(t_{n-\alpha_n/2})$, 
are central to the $L2\!-\!1_{\sigma}$ discretization and to the stability and error estimates. Theoretically, second-order accuracy and stability are usually proved under
\[
\alpha_n\in\left[\min_{t\in[t_{n-1},t_n]}\alpha(t),\ \max_{t\in[t_{n-1},t_n]}\alpha(t)\right]
\quad\text{and}\quad
\alpha_n\le \alpha\!\left(t_{n-\alpha_n/2}\right).
\]
Numerical tests indicate that the second inequality can be relaxed or even removed without losing convergence. Moreover, $\alpha_n$ can be chosen straightforwardly at each time step from a large family of  admissible values, so that  it is unnecessary to solve a nonlinear equation for a unique $\alpha_n$.
 These findings motivate a more practical parameter-selection strategy with multiple admissible choices at each time level.

 Extensive tests confirm the predicted temporal accuracy $\min{2,r\delta}$ (or the analogous rate dictated by the initial singularity), the spatial order $\min\{s,k+1\}$ for polynomials of degree $k$, and the robustness of the method with respect to different choices of superconvergent points and complex interface configurations.

Future work may include extending the present framework to other variable-order fractional models and developing fast algorithms for long-time simulation.

 \section*{CRediT authorship contribution statement} 
 
\textbf{Hongying Huang:} {Conceptualization of this study, Methodology, Writing - original draft, Writing - review \& editing}; 
\textbf{Chanchan Hao: }{Investigation, Methodology, Validation, Writing - original draft}; 
\textbf{Changmu Yu:} {Investigation, Validation, Software}; 
\textbf{Huili Zhang:} {Validation, Software, Funding acquisition, Supervision, Writing - review \& editing}.

\section*{Data availability}

No data was used for the research described in the article.

\section*{Declaration of competing interest}

This work does not have any conflicts of interest.

\section*{Acknowledgement}

The authors are grateful to the anonymous referees for their valuable comments
and suggestions, which helped to improve the article.

Huang's work was supported by the National Natural Science Foundation of China (Grant No. 11771398) and the Innovation Team Project of Regular Universities in Guangdong Province (2025KCXTD037). Zhang's work was supported by the Tertiary Education Scientific Research Project of Guangzhou Municipal
Education Bureau (2024312092).

 \bibliographystyle{plain}

\bibliography{subdiffusion_references}

@article{du2020temporal,
  author  = {Du, R. and Alikhanov, A. A. and Sun, Z.},
  title   = {Temporal second order difference schemes for the multi-dimensional variable-order time fractional sub-diffusion equations},
  journal = {Computers \& Mathematics with Applications},
  volume  = {79},
  number  = {10},
  pages   = {2952--2972},
  year    = {2020},
  doi     = {10.1016/j.camwa.2020.01.003}
}

@article{gu2023two,
  author  = {Gu, Q. and Chen, Y. and Zhou, J. and Huang, Y.},
  title   = {A two-grid virtual element method for nonlinear variable-order time-fractional diffusion equation on polygonal meshes},
  journal = {International Journal of Computer Mathematics},
  volume  = {100},
  number  = {11},
  pages   = {2124--2139},
  year    = {2023},
  doi     = {10.1080/00207160.2023.2263589}
}

@article{heydari2019computational,
  author  = {Heydari, M. H. and Avazzadeh, Z. and Yang, Y.},
  title   = {A computational method for solving variable-order fractional nonlinear diffusion-wave equation},
  journal = {Applied Mathematics and Computation},
  volume  = {352},
  pages   = {235--248},
  year    = {2019},
  doi     = {10.1016/j.amc.2019.01.075}
}

@article{heydari2020cardinal,
  author  = {Heydari, M. H. and Avazzadeh, Z. and Yang, Y. and Cattani, C.},
  title   = {A cardinal method to solve coupled nonlinear variable-order time fractional sine-{G}ordon equations},
  journal = {Computational and Applied Mathematics},
  volume  = {39},
  number  = {1},
  pages   = {2},
  year    = {2020},
  doi     = {10.1007/s40314-019-0936-z}
}

@article{huang2023alpha,
  author  = {Huang, C. and An, N. and Chen, H. and Yu, X.},
  title   = {$\alpha$-robust error analysis of two nonuniform schemes for subdiffusion equations with variable-order derivatives},
  journal = {Journal of Scientific Computing},
  volume  = {97},
  pages   = {43},
  year    = {2023},
  doi     = {10.1007/s10915-023-02357-5}
}

@article{liao2019discrete,
  author  = {Liao, H. and McLean, W. and Zhang, J.},
  title   = {A discrete {G}r\"{o}nwall inequality with applications to numerical schemes for subdiffusion problems},
  journal = {SIAM Journal on Numerical Analysis},
  volume  = {57},
  number  = {1},
  pages   = {218--237},
  year    = {2019},
  doi     = {10.1137/16M1175742}
}

@article{ma2023stabilizer,
  author  = {Ma, J. and Gao, F. and Du, N.},
  title   = {A stabilizer-free weak {G}alerkin finite element method to variable-order time fractional diffusion equation in multiple space dimensions},
  journal = {Numerical Methods for Partial Differential Equations},
  volume  = {39},
  number  = {3},
  pages   = {2096--2114},
  year    = {2023},
  doi     = {10.1002/num.22959}
}

@article{metzler2000random,
  author  = {Metzler, R. and Klafter, J.},
  title   = {The random walk's guide to anomalous diffusion: a fractional dynamics approach},
  journal = {Physics Reports},
  volume  = {339},
  number  = {1},
  pages   = {1--77},
  year    = {2000},
  doi     = {10.1016/S0370-1573(00)00070-3}
}

@article{metzler2000subdiffusive,
  author  = {Metzler, R. and Klafter, J.},
  title   = {Subdiffusive transport close to thermal equilibrium: {F}rom the {L}angevin equation to fractional diffusion},
  journal = {Physical Review E},
  volume  = {61},
  number  = {6},
  pages   = {6308--6311},
  year    = {2000},
  doi     = {10.1103/physreve.61.6308}
}

@ARTICLE{mu2013weak,
  AUTHOR =       {Mu, L. and Wang, J. P. and Wei, G. W. and Ye, X. and Zhao, S.},
  TITLE =        {Weak {G}alerkin methods for second order elliptic interface problems},
  JOURNAL =      {Journal of Computational Physics },
  YEAR =         {2013},
  volume =       {250},
  pages =        {106--125},
  doi = {10.1016/j.jcp.2013.04.042},
}

@book{podlubny1998fractional,
  author    = {Podlubny, I.},
  title     = {Fractional Differential Equations: An Introduction to Fractional Derivatives, Fractional Differential Equations, to Methods of Their Solution and Some of Their 
  Applications},
  series    = {Mathematics in Science \& Engineering},
  volume    = {198},
  publisher = {Academic Press},
  address   = {San Diego, CA},
  year      = {1998}
}

@article{umarov2009variable,
  author  = {Umarov, S. and Steinberg, S.},
  title   = {Variable order differential equations with piecewise constant order-function and diffusion with changing modes},
  journal = {Zeitschrift fur Analysis und ihre Anwendungen},
  volume  = {28},
  number  = {4},
  pages   = {431--450},
  year    = {2009},
  doi     = {10.4171/ZAA/1392}
}

@article{wang2019analysis,
  author  = {Wang, H. and Zheng, X.},
  title   = {Analysis and numerical solution of a nonlinear variable-order fractional differential equation},
  journal = {Advances in Computational Mathematics},
  volume  = {45},
  number  = {5},
  pages   = {2647--2675},
  year    = {2019},
  doi     = {10.1007/s10444-019-09690-0}
}

@article{wang2019wellposedness,
  author  = {Wang, H. and Zheng, X.},
  title   = {Wellposedness and regularity of the variable-order time-fractional diffusion equations},
  journal = {Journal of Mathematical Analysis and Applications},
  volume  = {475},
  number  = {2},
  pages   = {1778--1802},
  year    = {2019},
  doi     = {10.1016/j.jmaa.2019.03.052}
}

@article{zhang2022exponential,
  author  = {Zhang, J. and Fang, Z. and Sun, H.},
  title   = {Exponential-sum-approximation technique for variable-order time-fractional diffusion equations},
  journal = {Journal of Applied Mathematics and Computing},
  volume  = {68},
  number  = {1},
  pages   = {323--347},
  year    = {2022},
  doi     = {10.1007/s12190-021-01528-7}
}

@article{zhang2022fast,
  author  = {Zhang, J. and Fang, Z. and Sun, H.},
  title   = {Fast second-order evaluation for variable-order {C}aputo fractional derivative with applications to fractional sub-diffusion equations},
  journal = {Numerical Mathematics: Theory, Methods and Applications},
  volume  = {15},
  number  = {1},
  pages   = {200--226},
  year    = {2022},
  doi     = {10.4208/nmtma.OA-2021-0148}
}

@article{zheng2025two,
  author  = {Zheng, X.},
  title   = {Two methods addressing variable-exponent fractional initial and boundary value problems and {Abel} integral equation},
  journal = {CSIAM Transactions on Applied Mathematics},
  volume  = {6},
  number  = {4},
  pages   = {666--710},
  year    = {2025},
  doi     = {10.4208/csiam-am.SO-2024-0052}
}

@article{zheng2021optimal,
  author  = {Zheng, X. and Wang, H.},
  title   = {Optimal-order error estimates of finite element approximations to variable-order time-fractional diffusion equations without regularity assumptions of the true solutions},
  journal = {IMA Journal of Numerical Analysis},
  volume  = {41},
  number  = {2},
  pages   = {1522--1545},
  year    = {2021},
  doi     = {10.1093/imanum/draa013}
}

@article{huang2020high,
  title={High order symmetric direct discontinuous Galerkin method for elliptic interface problems with fitted mesh},
  author={Huang, H.Y. and Li, J. and Yan, J.},
  journal={Journal of Computational Physics},
  volume={409},
  pages={109301},
  year={2020},
}

@article{hao2026interior,
  author  = {Hao, C.C. and Huang, H.Y. and Zhang, H.L.},
  title   = {Interior penalty method for variable-order time-fractional mobile-immobile model with discontinuous coefficients},
  journal = {Journal of Applied Mathematics and Computing},
  volume  = {72},
  number  = {1},
  pages   = {16},
  year    = {2026}, 
}

@article{huang2026determining,
  author  = {Huang, H.Y. and Zheng, X.C. and Zhang, H.L.},
  title   = {Determining superconvergence points for {$L2-1_{\sigma}$} scheme of variable-exponent subdiffusion and error estimate},
  journal = {ZAMM - Journal of Applied Mathematics and Mechanics (Z Angew Math Mech)},
  volume  = {106},
  pages   = {e70326},
  year    = {2026}
}

@article{bai2023coupling,
  title={Coupling of direct discontinuous Galerkin method and natural boundary element method for exterior interface problems with curved elements},
  author={ Bai, S.Y. and Huang, H.Y.},
  journal={Advances in Computational Mathematics},
  volume={49},
  number={1},
  pages={6},
  year={2023},
}

@article{huang2020optimal,
  title={Optimal spatial $H1$-norm analysis of a finite element method for a time-fractional diffusion equation},
  author={Huang, C.B. and Stynes, M.},
  journal={Journal of Computational and Applied Mathematics},
  volume={367},
  pages={112435},
  year={2020},
}

@article{sun2019review,
  title={A review on variable-order fractional differential equations: mathematical foundations, physical models, numerical methods and applications},
  author={Sun, H.G. and Chang, A.L. and Zhang, Y. and Chen, W.},
  journal={Fractional Calculus and Applied Analysis},
  volume={22},
  number={1},
  pages={27--59},
  year={2019},
  publisher={De Gruyter}
}

@article{chen2022immersed,
  title={Immersed finite element method for time fractional diffusion problems with discontinuous coefficients},
  author={Chen, Y.P. and Li, Q.F. and Yi, H.M. and Huang, Y.Q.},
  journal={Computers \& Mathematics with Applications},
  volume={128},
  pages={121--129},
  year={2022},
  publisher={Elsevier}
}

@article{cangiani2018adaptive,
  title={Adaptive discontinuous Galerkin methods for elliptic interface problems},
  author={Cangiani, A. and Georgoulis, E. and Sabawi, Y.},
  journal={Mathematics of Computation},
  volume={87},
  number={314},
  pages={2675--2707},
  year={2018}
}

@article{cangiani2021version,
  title={$hp$-Version discontinuous Galerkin methods on essentially arbitrarily-shaped elements},
  author={Cangiani, A.  and  Dong, Z.N. and  Georgoulis, E. },
  journal={Mathematics of Computation},
  year={2021},
  volume={91},
  number={333},
  pages={1-35},
  doi = {10.1090/mcom/3667},
}

@inproceedings{3,
  title={Bone marrow transplantation in severe combined immunodeficiency with an unrelated MLC compatible donor},
  author={DuPont, B},
  booktitle={Proceedings of the third annual meeting of the International Society for Experimental Hematology. Houston: International Society for Experimental Hematology},
  volume={44},
  year={1974}
}

@book{riviere2008discontinuous,
  title={Discontinuous Galerkin methods for solving elliptic and parabolic equations: theory and implementation},
  author={Rivi{\`e}re, B.},
  year={2008},
  publisher={SIAM}
}

\end{document}